\newtheorem{thm}{Theorem}[section]
\newtheorem{lem}[thm]{Lemma}
\newtheorem{cor}[thm]{Corollary}
\newtheorem{prop}[thm]{Proposition}
\theoremstyle{definition}
\newtheorem{example}[thm]{Example}
\newtheorem{defn}[thm]{Definition}
\newtheorem{rem}[thm]{Remark}
\newtheorem{ass}[thm]{Assumption}
\numberwithin{equation}{thm}
\begin{document}
\title[ $n$-ANGULATED CATEGORIES FROM SELF-INJECTIVE ALGEBRAS ]
{$n$-ANGULATED CATEGORIES FROM SELF-INJECTIVE ALGEBRAS}

\author{Zengqiang Lin}
\address{ School of Mathematical sciences, Huaqiao University,
Quanzhou\quad 362021,  China.} \email{lzq134@163.com}

\thanks{This work was supported  by  the Natural Science Foundation of Fujian Province (Grants No. 2013J05009) and the Science Foundation of Huaqiao University (Grants No. 2014KJTD14)}

\subjclass[2010]{16G20, 18E30, 18E10}

\keywords{  $n$-angulated category; Frobenius category; self-injective algebra; periodic algebra.}

\begin{abstract}
Let $\mathcal{C}$ be a $k$-linear category with split idempotents, and $\Sigma:\mathcal{C}\rightarrow\mathcal{C}$ an automorphism.
We show that there is an $n$-angulated structure on $(\mathcal{C},\Sigma)$ under certain conditions.
As an application, we obtain a class of examples of $n$-angulated categories from self-injective algebras.
\end{abstract}

\maketitle

\section{Introduction}
Let $n$ be an integer greater than or equal to three. Geiss, Keller and Oppermann  introduced the notion of $n$-angulated categories, which is a $``$higher dimensional" analogue of triangulated categories, and gave the standard construction of $n$-angulated categories from  $(n-2)$-cluster tilting subcategories of a triangulated category which are closed under the $(n-2)$-nd power of the suspension functor \cite{[GKO]}. For $n=3$, an $n$-angulated category is nothing but a  classical triangulated category. Another examples of $n$-angulated categories from local algebras were given in \cite{[BT2]}. Let $R$ be a commutative local ring with maximal principal ideal ${\bf{m}}=(p)$ satisfying ${\bf{m}}^2=0$. Then the category of finitely generated free $R$-modules has a structure of $n$-angulation whenever $n$ is even, or when $n$ is odd and $2p=0$ in $R$. The theory of $n$-angulated categories has been developed further, we can see \cite{[BT1],[BT3],[J],[Jo],[L1],[L2]} for reference.  In this note, we devote to provide new class of examples of $n$-angulated categories.

Throughout this paper let $k$ be an algebraically closed field, and let $\mathcal{C}$ be a $k$-linear category with split idempotents and $\Sigma:\mathcal{C}\rightarrow\mathcal{C}$ an automorphism. It is natural to ask under which conditions does the category $(\mathcal{C}, \Sigma)$ has an $n$-angulation. We first note that if $\mathcal{C}$ is an $n$-angulated category, then the category $\mbox{mod}\mathcal{C}$ of contravariant finitely presented and exact  functors from $\mathcal{C}$ to mod$k$ is a Frobenius category. Now we assume that $\mbox{mod}\mathcal{C}$ is a Frobenius category. Then the stable category $\underline{\mbox{mod}}\mathcal{C}$ is a triangulated category and the suspension  is the cosyzygy functor $\Omega^{-1}$. The automorphism $\Sigma$ can be extended to an exact functor from $\mbox{mod}\mathcal{C}$ to $\mbox{mod}\mathcal{C}$ and thus to a triangle functor of  $\underline{\mbox{mod}}\mathcal{C}$.  In this case, $(\Sigma, \sigma)$ and $(\Omega^{-n},(-1)^n)$ are two triangle endofunctors of $\underline{\mbox{mod}}\mathcal{C}$, where $\sigma:\Sigma\Omega^{-1}\rightarrow\Omega^{-1}\Sigma$ is a natural isomorphism. Heller showed in \cite{[He]} that there is a bijection between the class of pre-triangulations of $(\mathcal{C}, \Sigma)$ and the class of  isomorphisms of triangle functors from $(\Sigma, \sigma)$ to $(\Omega^{-3},-1)$. Since Heller did not succeed in proving the octahedral axiom, Amiot gave a necessary condition on the functor $\Sigma$ such  that $(\mathcal{C},\Sigma)$ has a triangulated structure, which is applied to deformed preprojective algebras \cite{[Am]}. In \cite{[GKO]} it is showed that Heller's parametrization of pre-triangulations extends to pre-$n$-angulations.
Our first main result is as follows.

\begin{thm}\label{1} Let $\mathcal{C}$ be a $k$-linear category with split idempotents and $\Sigma:\mathcal{C}\rightarrow\mathcal{C}$ an automorphism. If $\mbox{mod}\mathcal{C}$ is a Frobenius category and there exists an exact sequence of exact endofunctors of $\mbox{mod}\mathcal{C}$
$$0\rightarrow \mbox{Id}\rightarrow  X^{1}\rightarrow X^{2}\rightarrow\cdots\rightarrow X^{n}\rightarrow \Sigma\rightarrow 0, $$
where  all the $X^{i}$ take values in $\mbox{proj}\mathcal{C}$. Then $(\mathcal{C},\Sigma)$ has an $n$-angulation structure.
\end{thm}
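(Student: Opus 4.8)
The plan is to use the parametrization of pre-$n$-angulations recalled in the introduction, and to isolate the verification of the remaining (octahedral) axiom as the one genuinely new point. Concretely, I would first produce from the given data an isomorphism of triangle functors $(\Sigma,\sigma)\xrightarrow{\sim}(\Omega^{-n},(-1)^{n})$ on $\underline{\mbox{mod}}\mathcal{C}$; by the Heller--GKO parametrization this already equips $(\mathcal{C},\Sigma)$ with a pre-$n$-angulation. It then remains to check the last axiom in order to promote this to a genuine $n$-angulation, and this is where the hypothesis that the $X^{i}$ are \emph{exact} functors (and not merely that a natural isomorphism $\Sigma\cong\Omega^{-n}$ exists in the stable category) will be used.

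First I would construct the triangle-functor isomorphism. Recall that $\mbox{mod}\mathcal{C}$ is Frobenius, so its projective and injective objects coincide and the suspension of $\underline{\mbox{mod}}\mathcal{C}$ is the cosyzygy $\Omega^{-1}$. Evaluating the given exact sequence of functors at an arbitrary $M\in\mbox{mod}\mathcal{C}$ yields an exact sequence
$$0\rightarrow M\rightarrow X^{1}M\rightarrow X^{2}M\rightarrow\cdots\rightarrow X^{n}M\rightarrow\Sigma M\rightarrow 0$$
in which each $X^{i}M$ lies in $\mbox{proj}\mathcal{C}=\mbox{inj}\mathcal{C}$. Splitting this into short exact sequences $0\rightarrow Z^{i-1}\rightarrow X^{i}M\rightarrow Z^{i}\rightarrow 0$ with $Z^{0}=M$ and $Z^{n}=\Sigma M$, and using that each $X^{i}M$ is injective, gives canonical stable isomorphisms $Z^{i}\cong\Omega^{-1}Z^{i-1}$, whence $\Sigma M\cong\Omega^{-n}M$ in $\underline{\mbox{mod}}\mathcal{C}$. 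Since the whole sequence is functorial in $M$, this assembles into a natural isomorphism $\phi:\Sigma\rightarrow\Omega^{-n}$ of endofunctors of $\underline{\mbox{mod}}\mathcal{C}$. The point to verify here is that $\phi$ is compatible with the triangle-functor structures, i.e. that the square relating $\sigma:\Sigma\Omega^{-1}\rightarrow\Omega^{-1}\Sigma$ to the canonical constraint $(-1)^{n}$ of $\Omega^{-n}=(\Omega^{-1})^{n}$ commutes; this is a coherence check that follows from the naturality of the splicing above with respect to $\Omega^{-1}$, together with the sign accumulated by iterating the connecting isomorphism $n$ times. Applying the parametrization to $\phi$ then produces the pre-$n$-angulation.

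The main obstacle is the final (higher octahedral/mapping-cone) axiom, exactly the point at which Heller's original argument stalled and for which Amiot had to impose an extra condition on $\Sigma$. Here the functorial exact resolution does the work: because each $X^{i}$ is an \emph{exact} endofunctor, applying it to a conflation of $\mbox{mod}\mathcal{C}$ again yields a conflation, so the resolution $0\rightarrow\mbox{Id}\rightarrow X^{1}\rightarrow\cdots\rightarrow X^{n}\rightarrow\Sigma\rightarrow 0$ carries a morphism of conflations to a morphism of the associated $n$-fold coresolutions by projective-injectives. I would use this, via a horseshoe-type construction in the Frobenius category $\mbox{mod}\mathcal{C}$, to build the commutative diagram demanded by the octahedral axiom: starting from the partial data of the axiom, lift it step by step along the functorial resolution, the exactness of the $X^{i}$ guaranteeing that each lift exists and that the resulting rows remain distinguished. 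Transporting the completed diagram back to $\mathcal{C}$ through the Yoneda identification $\mathcal{C}\cong\mbox{proj}\,\mbox{mod}\mathcal{C}$ then yields the required morphism of $n$-angles, which finishes the verification and hence the proof.
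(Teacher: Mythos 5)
Your first step matches the paper's: evaluating the functorial resolution at $M$ and splicing against a fixed injective resolution produces a natural isomorphism $\alpha\colon\Sigma\rightarrow\Omega^{-n}$ on $\underline{\mbox{mod}}\mathcal{C}$ (the paper's Lemma 2.4), and the axioms (N1)--(N3) then follow either from the Heller--GKO parametrization, as you propose, or by direct verification as the paper does. The gap is in (N4). You describe (N4) as the task of ``building the commutative diagram demanded by the octahedral axiom'' by a horseshoe-type lifting, but completing the diagram is already (N3); the actual content of (N4) is that the completion $\varphi_3,\dots,\varphi_n$ can be \emph{chosen} so that the mapping cone is again an $n$-angle, and your proposal never explains how the lifting controls the cone. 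An arbitrary completion supplied by (N3) need not have this property, so some mechanism for making the right choice is indispensable.

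The paper's mechanism is a reduction to standard $n$-angles that your sketch omits. One works in the homotopy category $K^{\mbox{ex}}_{n\mbox{-}\Sigma}(\mbox{proj}\mathcal{C})$ of $n$-$\Sigma$-periodic acyclic complexes, with the exact functors $Z_1$ (kernel of $f_1$) and $T$ (sending $M$ to the complex $T_M$ obtained by evaluating the resolution at $M$). The key technical lemma is that $Z_1$ is full and its kernel is an ideal whose square vanishes, so $Z_1$ detects isomorphisms; combined with $Z_1T=\mbox{Id}$ this shows every $n$-angle in $\Phi$ is homotopy equivalent to a standard one $T_M$. Consequently, in the situation of (N4) the given square induces $h\colon M\rightarrow N$ on kernels, and the morphism $b_\bullet\cdot T(h)\cdot a_\bullet$ (with $a_\bullet$, $b_\bullet$ the homotopy equivalences to and from the standard $n$-angles) agrees with $(\varphi_1,\varphi_2)$ up to an explicit homotopy; adjusting by that homotopy yields the required completion, and its cone is homotopy equivalent to $C(T(h))\cong T_{C(h)}$ because $T$ is a triangle functor, hence lies in $\Phi$. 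Your remark that exactness of the $X^{i}$ makes the resolution carry conflations to conflations is precisely why $T$ is exact and induces a triangle functor, so you are pointing at the right ingredient; but without the fullness and square-zero-kernel lemma, and the resulting reduction of an arbitrary morphism of $n$-angles to one of the form $T(h)$ up to homotopy, the argument for (N4) does not close.
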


Theorem \ref{1} is a higher version of  \cite[Theorem 8.1]{[Am]}. Since $n$-angulated categories are more complex than triangulated categories,  we should make some  technological modifications in the proof.

Let $A$ be a finite-dimensional $k$-algebra. Given an automorphism $\sigma$ of $A$, we denote by $_1A_\sigma$ the bimodule structure on $A$ where the action on right is twisted by $\sigma$.
It is easy to check that $_1A_\sigma\otimes_A$ $_1A_\tau\cong$ $_1A_{\tau\sigma}$, where $\sigma$ and $\tau$ are two automorphisms.
A finite-dimensional $k$-algebra $A$ is said to be $quasi$-$periodic$ if  $A$ has a quasi-periodic projective resolution over the enveloping algebra $A^e=A^{\tiny\mbox{op}}\otimes_k A$, i.e., $\Omega^n_{A^e}(A)\cong\ _1A_\sigma$ as $A$-$A$-bimodules for some natural number $n$ and some automorphism $\sigma$ of $A$. In particular, $A$ is  $periodic$ if $\Omega^n_{A^e}(A)\cong A$ as bimodules.  In this case, if $n$ is minimal, we say $A$ is a periodic algebra of $periodicity$ $n$. It is well known that quasi-periodic algebras are self-injective algebras \cite{[GSS]}.
Our second main result is as follows.


\begin{thm}(=Theorem 4.3.)\label{2}
Let $A$ be a finite-dimensional indecomposable quasi-periodic $k$-algebra. Assume that $\Omega^n_{A^e}(A)\cong$ $_1A_\sigma$ as $A$-$A$-bimodules for
  an automorphism $\sigma$ of $A$. Then for each positive integer $m$,  the category $(\mbox{proj}A, \Sigma)$ has an $mn$-angulation structure, where $\Sigma$ is the functor $-\otimes_AA_{\sigma^{-m}}: \mbox{proj}A\rightarrow\mbox{proj}A$.
  In particular, if $\sigma$ is of finite order $l$, then $(\mbox{proj}A, \mbox{Id}_{\mbox{proj}A})$ has an $ln$-angulation structure.
\end{thm}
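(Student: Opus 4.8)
The plan is to deduce Theorem~\ref{2} from Theorem~\ref{1} applied to $\mathcal{C}=\mathrm{proj}A$. First I would record the standard identification $\mathrm{mod}(\mathrm{proj}A)\simeq\mathrm{mod}A$: a finitely presented functor $(\mathrm{proj}A)^{\mathrm{op}}\to\mathrm{mod}k$ admits a presentation $\mathrm{Hom}_A(-,P_1)\to\mathrm{Hom}_A(-,P_0)\to F\to 0$ arising by Yoneda from a map $P_1\to P_0$ of projectives, and sending $F$ to $\mathrm{coker}(P_1\to P_0)$ gives the equivalence. Under it the representable (projective) objects correspond to the projective $A$-modules, so $\mathrm{proj}\,\mathcal{C}$ matches $\mathrm{proj}A$. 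Since $A$ is quasi-periodic it is self-injective, hence $\mathrm{mod}A$ is a Frobenius category, which is the first hypothesis of Theorem~\ref{1}. Moreover $\Sigma=-\otimes_A A_{\sigma^{-m}}$ on $\mathrm{proj}A$ extends under the equivalence to the endofunctor $-\otimes_A{}_1A_{\sigma^{-m}}$ of $\mathrm{mod}A$, which is exact because the invertible bimodule ${}_1A_{\sigma^{-m}}$ is free on each side.

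The heart of the argument is to produce the exact sequence of exact projective-valued endofunctors required by Theorem~\ref{1}. Starting from $\Omega^n_{A^e}(A)\cong{}_1A_\sigma$, I would take the truncated projective bimodule resolution
\[
0\longrightarrow {}_1A_\sigma\longrightarrow P_{n-1}\longrightarrow\cdots\longrightarrow P_0\longrightarrow A\longrightarrow 0,
\]
with each $P_i$ a projective $A^e$-module. Because ${}_1A_\sigma$, $A$ and the $P_i$ are all free as one-sided $A$-modules, this sequence splits on each side, hence stays exact after tensoring with any invertible bimodule ${}_1A_{\sigma^{-j}}$. Using ${}_1A_\sigma\otimes_A{}_1A_\tau\cong{}_1A_{\tau\sigma}$, tensoring the displayed sequence with ${}_1A_{\sigma^{-(j+1)}}$ yields, for $0\le j\le m-1$, an exact sequence running from ${}_1A_{\sigma^{-j}}$ to ${}_1A_{\sigma^{-(j+1)}}$ whose $n$ interior terms are projective bimodules (a projective bimodule tensored with an invertible bimodule is again projective). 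Splicing these $m$ sequences along their shared end terms ${}_1A_{\sigma^{-j}}$ produces a single exact sequence of bimodules
\[
0\longrightarrow A\longrightarrow B_{mn}\longrightarrow\cdots\longrightarrow B_1\longrightarrow {}_1A_{\sigma^{-m}}\longrightarrow 0,
\]
with all $B_i$ projective bimodules.

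Tensoring on the appropriate side, $M\mapsto M\otimes_A(-)$, converts this into an exact sequence of endofunctors of $\mathrm{mod}A$; exactness is preserved since every $B_i$ (and each end) is one-sided free, and the interior functors $-\otimes_A B_i$ take values in $\mathrm{proj}A$ because $B_i$ is a projective bimodule. Its two ends are $-\otimes_A A=\mathrm{Id}$ and $-\otimes_A{}_1A_{\sigma^{-m}}=\Sigma$, so this is exactly the sequence
\[
0\to\mathrm{Id}\to X^1\to\cdots\to X^{mn}\to\Sigma\to 0
\]
demanded by Theorem~\ref{1} with $n$ replaced by $mn$; Theorem~\ref{1} then gives the $mn$-angulation on $(\mathrm{proj}A,\Sigma)$. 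For the final clause, if $\sigma^l=\mathrm{id}$ I would take $m=l$, so that ${}_1A_{\sigma^{-l}}=A$ and $\Sigma=-\otimes_A A\cong\mathrm{Id}_{\mathrm{proj}A}$, yielding the $ln$-angulation on $(\mathrm{proj}A,\mathrm{Id})$.

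I expect the main obstacle to be the bookkeeping in the middle step rather than any deep difficulty: one must keep the twists $\sigma^{-j}$ consistent, verify at each stage that tensoring with ${}_1A_{\sigma^{-1}}$ and splicing preserve exactness (resting on the one-sided freeness of all the bimodules involved), and confirm that the resulting endofunctors are genuinely exact and projective-valued in $\mathrm{mod}(\mathrm{proj}A)\simeq\mathrm{mod}A$ so that Theorem~\ref{1} applies verbatim. Matching the side convention of the twist in $\Sigma=-\otimes_A A_{\sigma^{-m}}$ with the direction of the resolution is the one place where genuine care is needed.
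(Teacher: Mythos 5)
Your proposal is correct and follows essentially the same route as the paper: reduce to Theorem~\ref{1} by turning the truncated bimodule resolution coming from $\Omega^n_{A^e}(A)\cong{}_1A_\sigma$ into an exact sequence of projective-valued exact endofunctors $0\to\mathrm{Id}\to X^1\to\cdots\to X^{mn}\to-\otimes_A A_{\sigma^{-m}}\to 0$ via tensoring with twisted bimodules and splicing $m$ shifted copies. The only (immaterial) difference is bookkeeping: the paper first splices by induction to get a sequence from ${}_1A_{\sigma^m}$ to $A$ and then tensors once with ${}_1A_{\sigma^{-m}}$ (its Lemma~4.2), whereas you twist each copy by a negative power of $\sigma$ before splicing.
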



 There are numerous examples of periodic algebras. The most notable examples are preprojective algebras of Dynkin graphs, whose periodicity at most 6. These results have been generalized to deformed preprojective algebras \cite{[BES]}. Dugas showed that each self-injective algebra of finite representation type is periodic \cite{[D1]}. We also can obtain periodic algebras as endomorphism algebras of periodic $d$-cluster-tilting objects in a triangulated category \cite{[D2]}.
Therefore, by Theorem \ref{2} we can construct a large class of examples of $n$-angulated categories from self-injective algebras.

This paper is organized as follows. In Section 2, we  recall the definition of $n$-angulated category and make some preliminaries to prove our first main result. In Section 3, we prove Theorem \ref{1}. In Section 4, we prove Theorem \ref{2} and give some examples.


\section{Definitions and preliminaries}

Let $\mathcal{C}$ be an additive category equipped with an automorphism $\Sigma:\mathcal{C}\rightarrow\mathcal{C}$. 
 An $n$-$\Sigma$-$sequence$ in $\mathcal{C}$ is a sequence of morphisms
$$X_\bullet= (X_1\xrightarrow{f_1}X_2\xrightarrow{f_2}X_3\xrightarrow{f_3}\cdots\xrightarrow{f_{n-1}}X_n\xrightarrow{f_n}\Sigma X_1).$$
Its $left\ rotation$ is the $n$-$\Sigma$-sequence
$$X_2\xrightarrow{f_2}X_3\xrightarrow{f_3}X_4\xrightarrow{f_4}\cdots\xrightarrow{f_{n-1}}X_n\xrightarrow{f_n}\Sigma X_1\xrightarrow{(-1)^n\Sigma f_1}\Sigma X_2.$$  We can define $right\ rotation\ of\ an$ $n$-$\Sigma$-$sequence$ similarly.
An $n$-$\Sigma$-$sequence$ $X_\bullet$ is $exact$ if the induced sequence
$$\cdots\rightarrow \mathcal{C}(-,X_1)\rightarrow \mathcal{C}(-,X_2)\rightarrow\cdots\rightarrow \mathcal{C}(-,X_n)\rightarrow \mathcal{C}(-,\Sigma X_1)\rightarrow\cdots$$
is exact. A $morphism\ of$ $n$-$\Sigma$-$sequences$ is  a sequence of morphisms $\varphi_\bullet=(\varphi_1,\varphi_2,\cdots,\varphi_n)$ such that the following diagram commutes
$$\xymatrix{
X_1 \ar[r]^{f_1}\ar[d]^{\varphi_1} & X_2 \ar[r]^{f_2}\ar[d]^{\varphi_2} & X_3 \ar[r]^{f_3}\ar[d]^{\varphi_3} & \cdots \ar[r]^{f_{n-1}}& X_n \ar[r]^{f_n}\ar[d]^{\varphi_n} & \Sigma X_1 \ar[d]^{\Sigma \varphi_1}\\
Y_1 \ar[r]^{g_1} & Y_2 \ar[r]^{g_2} & Y_3 \ar[r]^{g_3} & \cdots \ar[r]^{g_{n-1}} & Y_n \ar[r]^{g_n}& \Sigma Y_1\\
}$$
where each row is an $n$-$\Sigma$-sequence. It is an {\em isomorphism} if $\varphi_1, \varphi_2, \cdots, \varphi_n$ are all isomorphisms in $\mathcal{C}$.

\begin{defn} (\cite{[GKO]})
An $n$-$angulated\ category$ is a triple $(\mathcal{C}, \Sigma, \Theta)$, where $\mathcal{C}$ is an additive category, $\Sigma$ is an automorphism of $\mathcal{C}$, and $\Theta$ is a class of $n$-$\Sigma$-sequences satisfying the following axioms:

(N1) (a) The class $\Theta$ is closed under direct sums and direct summands.

(b) For each object $X\in\mathcal{C}$ the trivial sequence
$$X\xrightarrow{1_X}X\rightarrow 0\rightarrow\cdots\rightarrow 0\rightarrow \Sigma X$$
belongs to $\Theta$.

(c) For each morphism $f_1:X_1\rightarrow X_2$ in $\mathcal{C}$, there exists an $n$-$\Sigma$-sequence in $\Theta$ whose first morphism is $f_1$.

(N2) An $n$-$\Sigma$-sequence belongs to $\Theta$ if and only if its left rotation belongs to $\Theta$.

(N3) Each commutative diagram
$$\xymatrix{
X_1 \ar[r]^{f_1}\ar[d]^{\varphi_1} & X_2 \ar[r]^{f_2}\ar[d]^{\varphi_2} & X_3 \ar[r]^{f_3}\ar@{-->}[d]^{\varphi_3} & \cdots \ar[r]^{f_{n-1}}& X_n \ar[r]^{f_n}\ar@{-->}[d]^{\varphi_n} & \Sigma X_1 \ar[d]^{\Sigma \varphi_1}\\
Y_1 \ar[r]^{g_1} & Y_2 \ar[r]^{g_2} & Y_3 \ar[r]^{g_3} & \cdots \ar[r]^{g_{n-1}} & Y_n \ar[r]^{g_n}& \Sigma Y_1\\
}$$ with rows in $\Theta$ can be completed to a morphism of  $n$-$\Sigma$-sequences.

(N4) In the situation of (N3), the morphisms $\varphi_3, \varphi_4, \cdots,\varphi_n$ can be chosen such that the mapping cone
$$X_2\oplus Y_1\xrightarrow{\left(
                              \begin{smallmatrix}
                                -f_2 & 0 \\
                                \varphi_2 & g_1 \\
                              \end{smallmatrix}
                            \right)}
 X_3\oplus Y_2 \xrightarrow{\left(
                              \begin{smallmatrix}
                                -f_3 & 0 \\
                                \varphi_3 & g_2 \\
                              \end{smallmatrix}
                            \right)}
 \cdots \xrightarrow{\left(
                            \begin{smallmatrix}
                               -f_n & 0 \\
                                \varphi_n & g_{n-1} \\
                             \end{smallmatrix}
                           \right)}
 \Sigma X_1\oplus Y_n \xrightarrow{\left(
                              \begin{smallmatrix}
                                -\Sigma f_1 & 0 \\
                                \Sigma\varphi_1 & g_n \\
                              \end{smallmatrix}
                            \right)}
 \Sigma X_2\oplus \Sigma Y_1 \\
$$
belongs to $\Theta$.
\end{defn}

\begin{rem}
(a) If $(\mathcal{C}, \Sigma, \Theta)$ is an $n$-angulated category, then $\Sigma$ is called a $suspension\ functor$ and $\Theta$ is called an $n$-$angulation$ of $(\mathcal{C}, \Sigma)$ whose elements are called $n$-$angles$. If $\Theta$ only satisfies the three axioms (N1),(N2) and (N3), then  $\Theta$ is called a $pre$-$n$-$angulation$ of $(\mathcal{C}, \Sigma)$ and the triple $(\mathcal{C}, \Sigma, \Theta)$ is called a $pre$-$n$-$angulated\ category$. In this case, an element of $\Theta$ is also called an $n$-$angle$.

(b) An $n$-$\Sigma$-$complex$ is a complex $X_\bullet=(X_i,f_i)_{i\in\mathbb{Z}}$ over $\mathcal{C}$ such that $X_{k+n}=\Sigma X_k$ and $f_{k+n}=\Sigma f_k$ for all $k\in\mathbb{Z}$. Let $X_\bullet= (X_1\xrightarrow{f_1}X_2\xrightarrow{f_2}X_3\xrightarrow{f_3}\cdots\xrightarrow{f_{n-1}}X_n\xrightarrow{f_n}\Sigma X_1)$ be an $n$-angle in a pre-$n$-angulated category $\mathcal{C}$, then $X_\bullet$ is exact by \cite[Proposition 2.5(a)]{[GKO]}, which implies that the compositions $f_2f_1,f_3f_2,\cdots, f_nf_{n-1},\Sigma f_1\cdot f_n$ are all zero morphisms. So $X_\bullet$ can be naturally seen as an $n$-$\Sigma$-complex.

(c) The automorphism $\Sigma$ of $\mathcal{C}$ induces an exact functor from $\mbox{mod}\mathcal{C}$ to $\mbox{mod}\mathcal{C}$ defined by $M\mapsto M\cdot \Sigma^{-1}$, which is also denoted by $\Sigma$.
\end{rem}

Throughout this section, we make the following assumption.
\begin{ass}
Let $\mathcal{C}$ be a $k$-linear category with split idempotents and $\Sigma:\mathcal{C}\rightarrow\mathcal{C}$ an automorphism, which satisfying the following two conditions:

(1) The category $\mbox{mod}\mathcal{C}$ is a Frobenius category.

(2) There exists an exact sequence of exact endofunctors of $\mbox{mod}\mathcal{C}$
$$\begin{gathered}0\rightarrow \mbox{Id}\rightarrow X^{1}\rightarrow X^{2}\rightarrow\cdots\rightarrow X^{n}\rightarrow \Sigma\rightarrow 0 \end{gathered}\eqno(2.1) $$
where all the $X^{i}$ take values in $\mbox{proj}\mathcal{C}$.
\end{ass}

Since $\mathcal{C}$ has split idempotents, the Yoneda functor gives a natural equivalence between $\mathcal{C}$ and $\mbox{proj}\mathcal{C}$, which is the subcategory of $\mbox{mod}\mathcal{C}$ consisting of projectives. For convenience we identify $\mathcal{C}$ with $\mbox{proj}\mathcal{C}$. 
Since $\mbox{mod}\mathcal{C}$ is a Frobenius category, we get $\mbox{proj}\mathcal{C}=\mbox{inj}\mathcal{C}$ and the quotient category $\underline{\mbox{mod}}\mathcal{C}$ is a triangulated category with the suspension functor $\Omega^{-1}$. In this case, the automorphism $\Sigma$ of $\mathcal{C}$ in fact induces a triangle functor of $\underline{\mbox{mod}}\mathcal{C}$ which is also denoted by $\Sigma$.
For each $M\in\mbox{mod}\mathcal{C}$, we fix a short exact sequence $0\rightarrow M\rightarrow I_M\rightarrow\Omega^{-1}M\rightarrow0$ with $I_M\in\mathcal{C}$. Thus we obtain a standard injective resolution
$$\begin{gathered}I_M\rightarrow I_{\Omega^{-1}M}\rightarrow I_{\Omega^{-2}M}\rightarrow\cdots \end{gathered}\eqno(2.2)$$
of $M$.

\begin{lem}
There exists a functorial isomorphism $\alpha:\Sigma\rightarrow\Omega^{-n}$.
\end{lem}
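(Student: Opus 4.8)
The plan is to evaluate the exact sequence of functors $(2.1)$ at an object, recognize the resulting exact sequence as one computing the $n$-th cosyzygy, and then promote the resulting pointwise isomorphism to a natural one. First I would apply the exact functors in $(2.1)$ to a fixed $M\in\mbox{mod}\mathcal{C}$. Since each $X^i$ is exact and $(2.1)$ is exact, I obtain an exact sequence in $\mbox{mod}\mathcal{C}$
$$0\rightarrow M\rightarrow X^1 M\rightarrow X^2 M\rightarrow\cdots\rightarrow X^n M\rightarrow \Sigma M\rightarrow 0,$$
in which every middle term $X^i M$ lies in $\mbox{proj}\mathcal{C}=\mbox{inj}\mathcal{C}$. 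Writing $Z^0=\mbox{Id}$, $Z^n=\Sigma$, and $Z^i=\mbox{im}(X^i\rightarrow X^{i+1})$ for $1\le i\le n-1$, this sequence breaks into short exact sequences of functors
$$0\rightarrow Z^{i-1}\rightarrow X^i\rightarrow Z^i\rightarrow 0,\qquad i=1,\ldots,n,$$
where I use that images, kernels and cokernels of natural transformations of exact functors are again exact functors.

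The key pointwise observation is that each short exact sequence $0\rightarrow Z^{i-1}M\rightarrow X^i M\rightarrow Z^i M\rightarrow 0$ has projective--injective middle term $X^i M$. In the triangulated stable category $\underline{\mbox{mod}}\mathcal{C}$ this yields a triangle $Z^{i-1}M\rightarrow X^i M\rightarrow Z^i M\rightarrow \Omega^{-1}Z^{i-1}M$ in which $X^i M=0$, so the connecting morphism $Z^i M\rightarrow\Omega^{-1}Z^{i-1}M$ is an isomorphism. Applying $\Omega^{-1}$ repeatedly and composing gives
$$\Sigma M=Z^n M\xrightarrow{\sim}\Omega^{-1}Z^{n-1}M\xrightarrow{\sim}\Omega^{-2}Z^{n-2}M\xrightarrow{\sim}\cdots\xrightarrow{\sim}\Omega^{-n}Z^0 M=\Omega^{-n}M,$$
which is the desired isomorphism $\alpha_M$ objectwise. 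Equivalently, one may view $(2.1)$ evaluated at $M$ as a length-$n$ injective coresolution and compare it with the standard resolution $(2.2)$ via the comparison theorem, the $n$-th cosyzygy of the latter being $\Omega^{-n}M$.

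The main point, and the step I expect to require the most care, is the \emph{naturality} of $\alpha$: producing the pointwise isomorphism is just dimension shifting, but ensuring it assembles into a natural transformation is the substance. I would derive this from the fact that the short exact sequences above are sequences \emph{of functors}. A morphism $f:M\rightarrow N$ induces, by functoriality of $Z^i$ and $X^i$, a morphism of the short exact sequences $0\rightarrow Z^{i-1}M\rightarrow X^i M\rightarrow Z^i M\rightarrow 0$, and the triangulated structure on $\underline{\mbox{mod}}\mathcal{C}$ is functorial, so a morphism of short exact sequences induces a morphism of the associated triangles. Hence each connecting isomorphism $Z^i\rightarrow\Omega^{-1}Z^{i-1}$ is a natural transformation, and composing natural isomorphisms shows that $\alpha:\Sigma\rightarrow\Omega^{-n}$ is a functorial isomorphism. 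The only subtlety to track is that comparison maps of resolutions are canonical only up to homotopy; but homotopic maps agree in $\underline{\mbox{mod}}\mathcal{C}$, so the naturality squares commute on the nose in the stable category.
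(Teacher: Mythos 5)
Your proof is correct, and its core is the same dimension-shifting idea as the paper's, but you execute it by a slightly different mechanism. The paper evaluates $(2.1)$ at $M$, lifts the identity of $M$ along the two injective resolutions (the one from $(2.1)$ and the standard one $(2.2)$) using the comparison theorem, reads off $\alpha_M:\Sigma M\to\Omega^{-n}M$ as the induced map on the final cokernels, and gets naturality again from the comparison theorem together with the fact that homotopic maps agree in $\underline{\mbox{mod}}\mathcal{C}$. You instead splice $(2.1)$ into conflations $0\to Z^{i-1}\to X^i\to Z^i\to 0$ with projective-injective middle terms and compose the resulting connecting isomorphisms $Z^i\to\Omega^{-1}Z^{i-1}$ in the stable category, deriving naturality from the functoriality of connecting morphisms of conflations. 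Your construction has the advantage of being manifestly independent of the choice of the standard resolution $(2.2)$; the paper's has the advantage that $\alpha_M$ is produced directly in the form in which it is used afterwards, namely as a comparison map against the fixed resolution $(2.2)$, which is what makes the subsequent comparison with $\beta_M$ (and the verification that the standard $n$-angles $T_M$ lie in $\Phi$) immediate. If you adopted your definition of $\alpha$, you would want to add the one-line observation that the composite of connecting isomorphisms coincides in $\underline{\mbox{mod}}\mathcal{C}$ with the comparison map between the two resolutions; for the lemma as stated, which only asserts existence of a functorial isomorphism, your argument is complete.
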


\begin{proof}
For each $M\in\mbox{mod}\mathcal{C}$, by (2.1) and (2.2) we obtain the following commutative diagram
$$\xymatrix{
0 \ar[r] & M \ar[r]\ar@{=}[d] & X^1M \ar[r]\ar[d] & X^2M \ar[r]\ar[d] & \cdots \ar[r] & X^{n}M \ar[r]\ar[d] & \Sigma M \ar[d]^{\alpha_M} \ar[r]& 0\\
0 \ar[r] & M \ar[r] & I_M \ar[r] & I_{\Omega^{-1}M} \ar[r] & \cdots \ar[r] & I_{\Omega^{1-n}M} \ar[r] & \Omega^{-n}M \ar[r] & 0\\
}$$
with exact rows, which implies that $\alpha_M:\Sigma M\xrightarrow{\sim}\Omega^{-n}M$ in $\underline{\mbox{mod}}\mathcal{C}$. For each morphism $f:M\rightarrow M'$ in $\mbox{mod}\mathcal{C}$, we can easily deduce that $\Omega^{-n}f\cdot\alpha_M=\alpha_{M'}\cdot\Sigma f$ by comparison theorem.
\end{proof}

Let $$\begin{gathered}X_1\xrightarrow{f_1}X_2\xrightarrow{f_2}X_3\xrightarrow{f_3}\cdots\xrightarrow{f_{n-1}}X_n\xrightarrow{f_n}\Sigma X_1\end{gathered}\eqno(2.3)$$
be an exact $n$-$\Sigma$-sequence in $\mathcal{C}$, and $M=\mbox{ker}f_1$. We note that (2.3) can be seen as the beginning of an injective resolution of $M$. Since $f_n$ has a factorization $X_n\twoheadrightarrow \Sigma M\rightarrowtail \Sigma X_1$, there exists an isomorphism $\beta_M:\Sigma M\xrightarrow{\sim}\Omega^{-n}M$ in $\underline{\mbox{mod}}\mathcal{C}$.

\begin{defn}
Denote by $\Phi$ the class of exact $n$-$\Sigma$-sequences $$X_1\xrightarrow{f_1}X_2\xrightarrow{f_2}X_3\xrightarrow{f_3}\cdots\xrightarrow{f_{n-1}}X_n\xrightarrow{f_n}\Sigma X_1$$
in $\mathcal{C}$ such that $\beta_{\mbox{ker}f_1}=\alpha_{\mbox{ker}f_1}$.
\end{defn}

We will show in next section that $(\mathcal{C},\Sigma,\Phi)$ is an $n$-angulated category. In the rest of this section, we will give an effective description on the elements in $\Phi$, see Proposition \ref{2.5}  for detail.

For each $M\in\mbox{mod}\mathcal{C}$, we denote by $T_M$ the $n$-$\Sigma$-sequence
$$X^{1}M\rightarrow X^{2}M\rightarrow\cdots\rightarrow X^{n}M\rightarrow \Sigma X^1M$$
induced by the exact sequence (2.1). It is easy to see that $T_M\in\Phi$.
We call $T_M$ a {\em standard $n$-angle}.

We denote by $C^{\tiny\mbox{ex}}(\mbox{proj}\mathcal{C})$ the category of acyclic complexes over $\mbox{proj}\mathcal{C}$. Denote by $C^{\tiny\mbox{ex}}_{n\mbox{-}\Sigma}(\mbox{proj}\mathcal{C})$ the non-full subcategory of $C^{\tiny\mbox{ex}}(\mbox{proj}\mathcal{C})$ whose objects are acyclic $n$-$\Sigma$-complexes of the following form
$$(X_\bullet, f_\bullet)=\cdots\rightarrow X_1\xrightarrow{f_1}X_2\xrightarrow{f_2}X_3\xrightarrow{f_3}\cdots\xrightarrow{f_{n-1}}X_n\xrightarrow{f_n}\Sigma X_1\xrightarrow{\Sigma f_1} \Sigma X_2\rightarrow\cdots,$$
and whose morphisms are $n$-$\Sigma$-periodic. We note that
the category $C^{\tiny\mbox{ex}}_{n\mbox{-}\Sigma}(\mbox{proj}\mathcal{C})$ is a Frobenius category and the projective-injectives are the $n$-$\Sigma$-contractible complexes, i.e., the complexes homotopic to zero with an $n$-$\Sigma$-periodic homotopy. The functor $Z_1:C^{\tiny\mbox{ex}}_{n\mbox{-}\Sigma}(\mbox{proj}\mathcal{C})\rightarrow \mbox{mod}\mathcal{C}$ which sends a complex $(X_\bullet,f_\bullet)$ to ker$f_1$ and the functor $T:\mbox{mod}\mathcal{C}\rightarrow C^{\tiny\mbox{ex}}_{n\mbox{-}\Sigma}(\mbox{proj}\mathcal{C})$ which sends an object $M$ to $T_M$ are exact functors. Both of the two functors preserve the projective-injectives, thus we get the following lemma.

\begin{lem}\label{2.1}
The functors $Z_1$ and $T$ induce triangle functors $Z_1:K^{\tiny\mbox{ex}}_{n\mbox{-}\Sigma}(\mbox{proj}\mathcal{C})\rightarrow \underline{\mbox{mod}}\mathcal{C}$ and
$T:\underline{\mbox{mod}}\mathcal{C}\rightarrow K^{\tiny\mbox{ex}}_{n\mbox{-}\Sigma}(\mbox{proj}\mathcal{C})$. Moreover, $Z_1T= Id_{\underline{\mbox{mod}}\mathcal{C}}$.
\end{lem}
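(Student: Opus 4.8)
The plan is to reduce the statement to the standard principle that an exact functor between Frobenius categories which preserves projective-injective objects descends to a triangle functor between the associated stable categories, and then to settle the last assertion by a direct kernel computation. Both $C^{\tiny\mbox{ex}}_{n\mbox{-}\Sigma}(\mbox{proj}\mathcal{C})$ and $\mbox{mod}\mathcal{C}$ are Frobenius categories, their stable categories being $K^{\tiny\mbox{ex}}_{n\mbox{-}\Sigma}(\mbox{proj}\mathcal{C})$ and $\underline{\mbox{mod}}\mathcal{C}$ respectively, and we are already given that $Z_1$ and $T$ are exact and preserve projective-injectives. Since such a functor carries a morphism factoring through a projective-injective object to another morphism of the same kind, both $Z_1$ and $T$ first descend to additive functors between the two stable categories.

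The next step is to promote these additive functors to triangle functors, that is, to equip each with a natural isomorphism intertwining the two suspensions. For a Frobenius category the suspension of the stable category is the cosyzygy $\Omega^{-1}$, computed from a conflation $0\rightarrow M\rightarrow I\rightarrow\Omega^{-1}M\rightarrow 0$ with $I$ projective-injective. Applying an exact, projective-preserving functor $F$ to such a conflation yields a conflation $0\rightarrow FM\rightarrow FI\rightarrow F\Omega^{-1}M\rightarrow 0$ whose middle term is again projective-injective; comparing it with the chosen conflation defining $\Omega^{-1}(FM)$ produces, by the usual comparison argument, a natural isomorphism $F\Omega^{-1}\xrightarrow{\sim}\Omega^{-1}F$ in the stable category, and exactness of $F$ then forces this datum to send conflation-triangles to conflation-triangles. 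I would run this argument verbatim for $F=Z_1$ and for $F=T$.

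Finally, for the equality $Z_1T=\mbox{Id}_{\underline{\mbox{mod}}\mathcal{C}}$, I would unwind the definitions: $T(M)=T_M$ has first morphism $X^1M\rightarrow X^2M$, so $Z_1(T_M)=\mbox{ker}(X^1M\rightarrow X^2M)$. Evaluating the defining exact sequence $(2.1)$ at $M$ gives $0\rightarrow M\rightarrow X^1M\rightarrow X^2M\rightarrow\cdots$, whence exactness identifies $\mbox{ker}(X^1M\rightarrow X^2M)$ with the image of the monomorphism $M\rightarrow X^1M$, which is canonically $M$, naturally in $M$; thus $Z_1T$ is the identity functor.

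I expect the principal technical obstacle to lie in the second step: constructing the natural isomorphism $F\Omega^{-1}\cong\Omega^{-1}F$ and verifying its naturality and its compatibility with the triangulated structure of $K^{\tiny\mbox{ex}}_{n\mbox{-}\Sigma}(\mbox{proj}\mathcal{C})$, where the abstract cosyzygy suspension must be matched against the internal shift of $n$-$\Sigma$-complexes. The remaining ingredients, namely exactness and preservation of projective-injectives, are already in hand, and the identity $Z_1T=\mbox{Id}$ is the short kernel computation above.
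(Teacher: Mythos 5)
Your proposal is correct and follows essentially the same route as the paper: the paper likewise observes that $C^{\tiny\mbox{ex}}_{n\mbox{-}\Sigma}(\mbox{proj}\mathcal{C})$ is Frobenius with the $n$-$\Sigma$-contractible complexes as projective-injectives, invokes the standard fact that exact functors preserving projective-injectives descend to triangle functors on stable categories, and reads off $Z_1T=\mbox{Id}$ from the exactness of the defining sequence $(2.1)$. Your write-up merely fills in more of the standard details (the comparison isomorphism $F\Omega^{-1}\cong\Omega^{-1}F$ and the explicit kernel computation) that the paper leaves implicit.
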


An object in $C^{\tiny\mbox{ex}}_{n\mbox{-}\Sigma}(\mbox{proj}\mathcal{C})$ is simply called an $n$-$\Sigma$-$complex$ in $\mathcal{C}$. Since an exact $n$-$\Sigma$-sequences in $\mathcal{C}$ can naturally extend to an $n$-$\Sigma$-complex, we can view $\Phi$ as a full subcategory of $C^{\tiny\mbox{ex}}_{n\mbox{-}\Sigma}(\mbox{proj}\mathcal{C})$. In this sense an object in $\Phi$ is called a $\Phi$-$n$-$\Sigma$-$complex$.


\begin{lem}\label{2.2}
Let $(X_\bullet, f_\bullet)$ 
 be an  $n$-$\Sigma$-complex and $(Y_\bullet, g_\bullet)$
a $\Phi$-$n$-$\Sigma$-complex. If $\varphi_\bullet: X_\bullet\rightarrow Y_\bullet$  is homotopy-equivalent, then $X_\bullet$ is also a $\Phi$-$n$-$\Sigma$-complex.
\end{lem}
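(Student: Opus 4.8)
The plan is to show that lying in $\Phi$ is a homotopy-invariant property by proving that the isomorphisms $\alpha$ and $\beta$ are both natural in the morphism induced on first kernels, and then cancelling an invertible factor. Put $M=\ker f_1$ and $N=\ker g_1$, and let $\psi=Z_1(\varphi_\bullet):M\to N$ be the morphism of $\mbox{mod}\mathcal{C}$ obtained by restricting $\varphi_1$ to these kernels. Since $\varphi_\bullet$ is a homotopy equivalence it becomes invertible in $K^{\tiny\mbox{ex}}_{n\mbox{-}\Sigma}(\mbox{proj}\mathcal{C})$, so by Lemma \ref{2.1} the triangle functor $Z_1$ carries it to an isomorphism $\psi:M\xrightarrow{\sim}N$ in $\underline{\mbox{mod}}\mathcal{C}$; in particular $\Omega^{-n}\psi$ is invertible there. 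The naturality of $\alpha$ is then immediate: as $\alpha:\Sigma\rightarrow\Omega^{-n}$ is a natural isomorphism of functors, $\Omega^{-n}\psi\circ\alpha_M=\alpha_N\circ\Sigma\psi$ in $\underline{\mbox{mod}}\mathcal{C}$.

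The crux is the analogous naturality for $\beta$, namely $\Omega^{-n}\psi\circ\beta_M=\beta_N\circ\Sigma\psi$. To establish it I would unwind the construction of $\beta$: regarding $X_\bullet$ and $Y_\bullet$ as the beginnings of injective resolutions of $M$ and $N$, the map $\beta_M$ is the identification $\Sigma M=\mbox{im}(f_n)=\Omega^{-n}M$ coming from the factorization $X_n\twoheadrightarrow\Sigma M\rightarrowtail\Sigma X_1$, and likewise for $\beta_N$. Because morphisms in $C^{\tiny\mbox{ex}}_{n\mbox{-}\Sigma}(\mbox{proj}\mathcal{C})$ are $n$-$\Sigma$-periodic, we have $\varphi_{n+1}=\Sigma\varphi_1$; this map sends $\Sigma X_1$ to $\Sigma Y_1$ and restricts to the images $\mbox{im}(f_n)\to\mbox{im}(g_n)$. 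By exactness $\mbox{im}(f_n)=\ker(\Sigma f_1)=\Sigma\ker(f_1)=\Sigma M$, so under the labelling by $\Sigma M,\Sigma N$ this restriction is precisely $\Sigma\psi$; under the labelling by $\Omega^{-n}M,\Omega^{-n}N$ it is, by the comparison theorem (exactly as in the proof that $\alpha$ is functorial), the map $\Omega^{-n}\psi$ in $\underline{\mbox{mod}}\mathcal{C}$. Since these are two labellings of one and the same restriction of $\varphi_{n+1}$, differing only by the identifications $\beta_M$ and $\beta_N$, the required square commutes. I expect this simultaneous identification of the restriction of $\varphi_{n+1}$ with $\Sigma\psi$ on one side and $\Omega^{-n}\psi$ on the other to be the only step demanding genuine care.

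Finally I would assemble the conclusion. Since $Y_\bullet\in\Phi$ we have $\beta_N=\alpha_N$, whence
$$\Omega^{-n}\psi\circ\beta_M=\beta_N\circ\Sigma\psi=\alpha_N\circ\Sigma\psi=\Omega^{-n}\psi\circ\alpha_M$$
in $\underline{\mbox{mod}}\mathcal{C}$. As $\Omega^{-n}\psi$ is an isomorphism it may be cancelled on the left, yielding $\beta_M=\alpha_M$, that is $\beta_{\ker f_1}=\alpha_{\ker f_1}$. Hence $X_\bullet$ belongs to $\Phi$, as desired.
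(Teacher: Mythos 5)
Your argument is correct and is essentially the paper's own proof: both reduce the claim to the two commuting squares $\Omega^{-n}\psi\cdot\alpha_M=\alpha_N\cdot\Sigma\psi$ (naturality of $\alpha$) and $\Omega^{-n}\psi\cdot\beta_M=\beta_N\cdot\Sigma\psi$ (comparison theorem for the two injective resolutions of $M$ and $N$), and then cancel the isomorphism $\Omega^{-n}\psi$ using $\beta_N=\alpha_N$. Your unwinding of the comparison-theorem step via the restriction of $\varphi_{n+1}=\Sigma\varphi_1$ is just a more explicit rendering of the same step the paper cites in one line.
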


\begin{proof}
Let $M=\mbox{ker}f_1$ and $N=\mbox{ker}g_1$, then $\varphi_\bullet$ induces a morphism $h=Z_1(\varphi_\bullet):M\rightarrow N$. The morphism $h$ is an isomorphism in $\underline{\mbox{mod}}\mathcal{C}$ since $\varphi_\bullet$ is an isomorphism in $K^{\tiny\mbox{ex}}_{n\mbox{-}\Sigma}(\mbox{proj}\mathcal{C})$. By comparison theorem we have $\Omega^{-n}h\cdot\beta_M=\beta_N\cdot\Sigma h$. We also have $\Omega^{-n}h\cdot\alpha_M=\alpha_N\cdot\Sigma h$ by the naturality of $\alpha$.  Since $\beta_N=\alpha_N$, we obtain that $\beta_M=(\Omega^{-n}h)^{-1}\cdot\alpha_N\cdot\Sigma h=\alpha_M$, so $X_\bullet$ is a $\Phi$-$n$-$\Sigma$-complex.
\end{proof}

\begin{lem}\label{2.3}
Each commutative diagram $$\xymatrix{
 X_1 \ar[r]^{f_1}\ar[d]^{\varphi_1} & X_2 \ar[r]^{f_2}\ar[d]^{\varphi_2} & X_3 \ar[r]^{f_3} & \cdots \ar[r]^{f_{n-1}}& X_n \ar[r]^{f_n} & \Sigma X_1 \ar[d]^{\Sigma \varphi_1}\\
 Y_1 \ar[r]^{g_1} & Y_2 \ar[r]^{g_2} & Y_3 \ar[r]^{g_3} & \cdots \ar[r]^{g_{n-1}} & Y_n \ar[r]^{g_n}& \Sigma Y_1\\
}$$ whose rows are $\Phi$-$n$-$\Sigma$-complexes can be extended to an $n$-$\Sigma$-periodic morphism.
\end{lem}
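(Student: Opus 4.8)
The plan is to regard the two rows as the beginnings of injective coresolutions, run the dual comparison theorem to fill in $\varphi_3,\dots,\varphi_n$, and then exploit the defining condition $\beta=\alpha$ of $\Phi$-complexes to arrange that the lift can be chosen $n$-$\Sigma$-periodic. First I would produce the induced map on kernels. Writing $M=\ker f_1$ and $N=\ker g_1$, the relation $g_1\varphi_1=\varphi_2 f_1$ forces $\varphi_1$ to carry $M$ into $N$, so $\varphi_1$ restricts to a morphism $h:M\to N$ with $\varphi_1\iota_M=\iota_N h$, where $\iota_M,\iota_N$ denote the inclusions. Since the rows are exact $n$-$\Sigma$-complexes over $\mbox{proj}\mathcal{C}=\mbox{inj}\mathcal{C}$, the sequences $0\to M\to X_1\to X_2\to\cdots$ and $0\to N\to Y_1\to Y_2\to\cdots$ are injective coresolutions of $M$ and $N$.

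Next I would construct $\varphi_3,\dots,\varphi_n$ one component at a time by the usual lifting argument: assuming the $i$-th square commutes, the composite $g_i\varphi_i$ kills $\ker f_i=\mbox{im}f_{i-1}$ (because $g_ig_{i-1}=0$), hence factors through $\mbox{im}f_i\hookrightarrow X_{i+1}$, and injectivity of $Y_{i+1}$ produces $\varphi_{i+1}$ with $\varphi_{i+1}f_i=g_i\varphi_i$. This is routine and makes the squares indexed $2$ through $n-1$ commute.

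The main obstacle is the wraparound. To obtain an $n$-$\Sigma$-periodic morphism I must be able to set $\varphi_{n+1}=\Sigma\varphi_1$, that is, to have $g_n\varphi_n=\Sigma\varphi_1\cdot f_n$, and the comparison theorem does not supply this for free. Put $\delta=g_n\varphi_n-\Sigma\varphi_1 f_n$. Using $f_nf_{n-1}=0$ and commutativity of square $n-1$ one checks that $\delta f_{n-1}=0$, so $\delta$ factors through the projection $\pi:X_n\twoheadrightarrow\mbox{im}f_n=\Sigma M$. Writing $\rho:Y_n\twoheadrightarrow\mbox{im}g_n=\Sigma N$ for the corestriction of $g_n$ and $\bar\varphi_n:\Sigma M\to\Sigma N$ for the map induced by $\varphi_n$, a short computation together with $\varphi_1\iota_M=\iota_N h$ gives $\delta=\Sigma\iota_N\circ(\bar\varphi_n-\Sigma h)\circ\pi$. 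Thus everything reduces to understanding the discrepancy $\bar\varphi_n-\Sigma h$.

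This is where the hypothesis $X_\bullet,Y_\bullet\in\Phi$ enters decisively, and I expect it, together with the bookkeeping around the correction, to be the only delicate part. Since $\varphi_\bullet$ realizes $h$ on $M$, it induces $\Omega^{-n}h$ on the $n$-th cosyzygy; transporting this along the identifications $\beta_M:\Sigma M\xrightarrow{\sim}\Omega^{-n}M$ and $\beta_N:\Sigma N\xrightarrow{\sim}\Omega^{-n}N$ yields $\bar\varphi_n=\beta_N^{-1}\cdot\Omega^{-n}h\cdot\beta_M$ in $\underline{\mbox{mod}}\mathcal{C}$. By naturality of $\alpha$ one has $\alpha_N^{-1}\cdot\Omega^{-n}h\cdot\alpha_M=\Sigma h$, and since $X_\bullet,Y_\bullet\in\Phi$ we have $\beta_M=\alpha_M$ and $\beta_N=\alpha_N$; hence $\bar\varphi_n=\Sigma h$ in $\underline{\mbox{mod}}\mathcal{C}$, so $\bar\varphi_n-\Sigma h$ factors through a projective-injective $P$, say $\bar\varphi_n-\Sigma h=ba$ with $a:\Sigma M\to P$ and $b:P\to\Sigma N$. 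Finally I would correct $\varphi_n$: lifting $b$ through the epimorphism $\rho$ by projectivity of $P$ to some $\tilde b:P\to Y_n$ and setting $\varphi_n'=\tilde b a\pi$, one obtains $g_n\varphi_n'=\delta$ and (using $\pi f_{n-1}=0$) $\varphi_n'f_{n-1}=0$; hence $\varphi_n-\varphi_n'$ still makes square $n-1$ commute and now satisfies $g_n(\varphi_n-\varphi_n')=\Sigma\varphi_1 f_n$. Replacing $\varphi_n$ by $\varphi_n-\varphi_n'$ and defining all remaining components by $n$-$\Sigma$-periodicity produces the required morphism.
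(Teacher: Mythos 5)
Your proposal is correct and follows essentially the same route as the paper's proof: build $\varphi_3,\dots,\varphi_n$ by the injective comparison argument, then use the $\Phi$-condition $\beta=\alpha$ on both rows to show the induced map $\bar\varphi_n:\Sigma M\to\Sigma N$ equals $\Sigma h$ in the stable category, and correct $\varphi_n$ by a summand factoring through a projective-injective so that the last square commutes. The only differences are notational (your $\delta$, $\bar\varphi_n$, $P$, $\tilde b$ correspond to the paper's implicit discrepancy, $p$, $I$, $c$).
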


\begin{proof}
Since the $Y_i$'s are projective-injectives, by the factorization property of cokernel and the definition of injective we can find morphisms $\varphi_i:X_i\rightarrow Y_i$ such that $\varphi_{i}f_{i-1}=g_{i-1}\varphi_{i-1}$, where $i=3,4,\cdots,n$. Let $M=\mbox{ker}f_1$, $N=\mbox{ker}g_1$, and $h:M\rightarrow N$ be the morphism induced by the left commutative square.
Assume that $f_n$ has a factorization $l_n\pi_n:X_n\twoheadrightarrow\Sigma M\rightarrowtail\Sigma X_1$ and $g_n$ has a factorization $l'_n\pi'_n:Y_n\twoheadrightarrow\Sigma N\rightarrowtail\Sigma Y_1$. The morphism $\varphi_n$ induces a morphism $p:\Sigma M\rightarrow\Sigma N$ such that $p\pi_n=\pi_n'\varphi_n$. It should be noted that we do not have $\Sigma\varphi_1\cdot l_n=l'_n p$, but we have $\Sigma\varphi_1\cdot l_n=l'_n\cdot \Sigma h$.
$$\xymatrix{
X_n \ar[ddd]^{\varphi_n} \ar[rrr]^{f_n} \ar@{->>}[rrd]^{\pi_n}& & & \Sigma X_1 \ar[ddd]^{\Sigma \varphi_1}\\
& &\Sigma M \ar[ddd]^{p} \ar@{>->}[ur]^{l_n} \ar@{-->}[ld]^{a}  & \\
& I \ar@{-->}[ld]^{c} \ar@{-->}[rdd]^{b} &  & \\
Y_n \ar[rrr]_{g_n}\ar@{->>}[rrd]^{\pi'_n}& & & \Sigma Y_1\\
& & \Sigma N \ar@{>->}[ur]^{l_n'} & \\
}$$
Note that $\beta_N\cdot p=\Omega^{-n}h\cdot\beta_M$ by comparison theorem. On the other hand, we have $\alpha_N\cdot \Sigma h=\Omega^{-n}h\cdot\alpha_M$ by the naturality of $\alpha$. Since $\beta_M=\alpha_M$ and $\beta_N=\alpha_N$, we obtain that $p=\alpha_N^{-1}\cdot\Omega^{-n}h\cdot\alpha_M=\Sigma h$ in $\underline{\mbox{mod}}\mathcal{C}$. Thus there exists a projective-injective $I$ in $\mbox{mod}\mathcal{C}$ and morphisms $a:\Sigma M\rightarrow I$ and $b:I\rightarrow\Sigma N$ such that $p-\Sigma h=ba$. As $I$ is projective, there exists a morphism $c:I\rightarrow Y_n$ such that $b=\pi_n'c$. We put $\varphi_n'=\varphi_n-ca\pi_n$, then $\varphi_n'f_{n-1}=\varphi_nf_{n-1}=g_{n-1}\varphi_{n-1}$ and $g_n\varphi_n'=l_n'\pi_n'(\varphi_n-ca\pi_n)=l_n'(p-ba)\pi_n=l_n'\cdot\Sigma h\cdot \pi_n=\Sigma\varphi_1\cdot l_n\pi_n=\Sigma\varphi_1\cdot f_n$.  Thus $(\varphi_1,\varphi_2,\varphi_3,\cdots,\varphi_{n-1},\varphi_n')$ is a morphism in $C^{ex}_{n\mbox{-}\Sigma}(\mbox{proj}\mathcal{C})$.
\end{proof}

\begin{lem} \label{2.4}
The functor $Z_1:K^{ex}_{n\mbox{-}\Sigma}(\mbox{proj}\mathcal{C})\rightarrow \underline{\mbox{mod}}\mathcal{C}$ is full and its kernel  is an ideal whose square vanishes. Thus $Z_1$ detects isomorphisms, that is, if $Z_1(f)$ is an isomorphism in $\underline{\mbox{mod}}\mathcal{C}$, then $f$ is a homotopy-equivalence.
\end{lem}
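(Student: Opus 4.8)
The plan is to prove the three assertions in turn, treating fullness and the square-zero property as the two substantive points and deducing the detection of isomorphisms formally.

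\emph{Fullness.} Given objects $X_\bullet,Y_\bullet$ of $K^{\mbox{ex}}_{n\mbox{-}\Sigma}(\mbox{proj}\mathcal{C})$, write $M=\ker f_1=Z_1(X_\bullet)$ and $N=\ker g_1=Z_1(Y_\bullet)$, and let $h\colon M\to N$ be a morphism in $\underline{\mbox{mod}}\mathcal{C}$. First I would choose a representative of $h$ in $\mbox{mod}\mathcal{C}$ and, using that $Y_1\in\mbox{inj}\mathcal{C}$ together with the monomorphism $M\rightarrowtail X_1$, extend the composite $M\to N\rightarrowtail Y_1$ to a map $\varphi_1\colon X_1\to Y_1$; a second lifting across $f_1,g_1$ produces $\varphi_2$ making the first square commute. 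This is exactly the input of Lemma \ref{2.3}, which I would invoke to complete the partial diagram to an $n$-$\Sigma$-periodic morphism $\varphi_\bullet$. Since $\varphi_1$ restricts on $M$ to the chosen representative of $h$, the induced map $Z_1(\varphi_\bullet)$ is $h$, which gives surjectivity of $Z_1$ on morphisms. The one delicate point is that the naive comparison theorem yields a chain map that need not be $n$-$\Sigma$-periodic; it is precisely Lemma \ref{2.3} — and through it the equality $\beta=\alpha$ encoded in $\Phi$ — that repairs the wrap-around square $X_n\to\Sigma X_1$ and makes the lift periodic.

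\emph{The kernel is a square-zero ideal.} That $\ker Z_1$ is an ideal is automatic for an additive functor, so the content is $(\ker Z_1)^2=0$. Let $\varphi_\bullet\colon X_\bullet\to Y_\bullet$ and $\psi_\bullet\colon Y_\bullet\to W_\bullet$ satisfy $Z_1\varphi_\bullet=0=Z_1\psi_\bullet$, with cycle objects $M,N,L$; I must show $\psi_\bullet\varphi_\bullet$ is $n$-$\Sigma$-periodically null-homotopic. Since $Z_1\varphi_\bullet=0$, the induced cycle map $\bar\varphi\colon M\to N$ factors through a projective-injective of $\mbox{mod}\mathcal{C}$; as every term of $X_\bullet,Y_\bullet$ lies in $\mbox{proj}\mathcal{C}=\mbox{inj}\mathcal{C}$, the complexes are complete resolutions, so a standard argument with the comparison theorem furnishes an (\emph{a priori} non-periodic) contracting homotopy $s_\bullet$ with $\varphi_\bullet=\partial^{Y}s_\bullet+s_\bullet\partial^{X}$. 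Then $\psi_\bullet s_\bullet$ contracts $\psi_\bullet\varphi_\bullet$, and the whole problem reduces to correcting it to a periodic homotopy. This correction is the obstacle I expect to dominate the proof. The failure of $s_\bullet$ to be periodic is recorded by the cocycle $s_{\bullet+n}-\Sigma s_\bullet$, which (because $\varphi_\bullet$ itself \emph{is} periodic) is a genuine degree $-1$ chain map and hence defines a class in a stable Hom group, of the type $\underline{\mbox{Hom}}(M,\Omega L)$; this class is exactly the obstruction to $\varphi_\bullet$ vanishing in $K^{\mbox{ex}}_{n\mbox{-}\Sigma}$. The non-periodicity of $\psi_\bullet s_\bullet$ is then $\psi_\bullet$ applied to $s_{\bullet+n}-\Sigma s_\bullet$, whose class is the image of the previous one under the map induced by $\psi_\bullet$ on stable Hom. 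Because $Z_1\psi_\bullet=0$, i.e. $\bar\psi=0$ in $\underline{\mbox{mod}}\mathcal{C}$, this induced map is zero, so the obstruction dies and $\psi_\bullet s_\bullet$ can be adjusted to an $n$-$\Sigma$-periodic null-homotopy of $\psi_\bullet\varphi_\bullet$. Making the passage from ``the periodicity defect is stably trivial'' to ``a periodic homotopy exists'' rigorous — most cleanly by identifying the morphisms of $K^{\mbox{ex}}_{n\mbox{-}\Sigma}$ through the $\Sigma$-coinvariants of the stable Hom groups of $\underline{\mbox{mod}}\mathcal{C}$, using $Z_1T=\mbox{Id}$ from Lemma \ref{2.1} — is where the real care is needed.

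\emph{Detection of isomorphisms.} This is now formal. If $Z_1(\varphi_\bullet)$ is an isomorphism, fullness supplies $\psi_\bullet$ with $Z_1(\psi_\bullet)=Z_1(\varphi_\bullet)^{-1}$; then $\psi_\bullet\varphi_\bullet=\mbox{id}+u$ and $\varphi_\bullet\psi_\bullet=\mbox{id}+u'$ with $u,u'\in\ker Z_1$. As $(\ker Z_1)^2=0$ we get $u^2=0=u'^2$, so $\mbox{id}+u$ and $\mbox{id}+u'$ are invertible with inverses $\mbox{id}-u$ and $\mbox{id}-u'$. Hence $\varphi_\bullet$ is simultaneously a split monomorphism and a split epimorphism, that is, a homotopy-equivalence.
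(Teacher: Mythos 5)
Your fullness argument and your deduction of the last assertion are essentially sound; the latter is exactly the formal argument the paper outsources to \cite[Lemma 8.6]{[Am]}. One caveat on fullness: your appeal to Lemma \ref{2.3} only applies when both rows are $\Phi$-$n$-$\Sigma$-complexes, since that lemma's hypothesis (and its proof, via $\beta=\alpha$) requires membership in $\Phi$, whereas $Z_1$ is defined on all of $K^{ex}_{n\mbox{-}\Sigma}(\mbox{proj}\mathcal{C})$; the paper's own one-line deduction of fullness from $Z_1T=\mbox{Id}$ (Lemma \ref{2.1}) has essentially the same scope, so this is a shared and minor point.

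The genuine gap is in the square-zero step, and you have located it yourself. The passage from ``the periodicity defect $s_{\bullet+n}-\Sigma s_\bullet$ becomes stably trivial after composing with $\psi_\bullet$'' to ``$\psi_\bullet\varphi_\bullet$ admits an $n$-$\Sigma$-\emph{periodic} null-homotopy'' is asserted (``the obstruction dies'') but never carried out, and it is the entire content of the claim. Vanishing of the class of $\Sigma\psi_\bullet\cdot(s_{\bullet+n}-\Sigma s_\bullet)$ in $\underline{\mbox{Hom}}(M,\Omega L)$ only says that the defect of $\psi_\bullet s_\bullet$ is null-homotopic as a degree $-1$ chain map; to get a periodic contracting homotopy you must still solve a difference equation of the form $e'_{i+n}-\Sigma e'_i=e_i$ over $\mathbb{Z}$ and verify that the resulting correction term is again a contracting homotopy of $\psi_\bullet\varphi_\bullet$. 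This can be done (the index set $\mathbb{Z}$ allows a two-sided recursive definition of $e'$), so your route is salvageable, but as written it is a plan rather than a proof, and it presupposes the identification of $\ker Z_1$ with $\Sigma$-coinvariants of stable Hom groups, which you also do not establish. The paper avoids all of this with a much more elementary normalization: by explicit liftings through the projective-injective terms it shows that any $\varphi_\bullet$ with $Z_1(\underline{\varphi_\bullet})=0$ is $n$-$\Sigma$-periodically homotopic to a morphism of the form $(0,\dots,0,\varphi_n)$; for two such composable morphisms one has $g_n\varphi_n=0$ and $\psi_ng_{n-1}=0$, so $\varphi_n$ factors through $g_{n-1}$ and $\psi_n$ factors through $g_n$, whence $\psi_n\varphi_n=b_{n+1}g_ng_{n-1}a_n=0$. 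To complete your write-up you should either prove the coinvariants description you invoke, or replace that part of the argument by the paper's direct normalization.
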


\begin{proof}
The last assertion follows from the first assertion and \cite[Lemma 8.6]{[Am]}. By Lemma \ref{2.1}, we have $Z_1T=\mbox{Id}_{\underline{\mbox{mod}}\mathcal{C}}$, which implies  that $Z_1$ is full. We only need to show that ker$Z_1$ is an ideal whose square vanishes.

Let $\varphi_\bullet:(X_\bullet,f_\bullet)\rightarrow (Y_\bullet,g_\bullet)$ be a morphism of $n$-$\Sigma$-complexes with $Z_1(\underline{\varphi_\bullet})=0$. Let $(M,l:M\rightarrow X_1)$ be the kernel of $f_1$ and $\Sigma^{-1}f_n=l\pi$. Similarly let $(N,l':N\rightarrow Y_1)$ be the kernel of $g_1$ and $\Sigma^{-1}g_n=l'\pi'$. Then $h=Z_1(\varphi_\bullet)$ has a factorization $M\xrightarrow{a}I\xrightarrow{b}N$, where $I$ is projective-injective. Thus there exist two morphisms $c:X_1\rightarrow I$ and $d:I\rightarrow\Sigma^{-1}Y_n$ such that $a=cl$ and $b=\pi'd$. Let $h_1=dc$. Note that $(\varphi_1-\Sigma^{-1}g_n\cdot h_1)\Sigma^{-1}f_n=0$, there exists a morphism $m_1:M_1\rightarrow Y_1$ such that $\varphi_1-\Sigma^{-1}g_n\cdot h_1=m_1\pi_1$. Since $Y_1$ is projective-injective, there exists a morphism $h_2:X_2\rightarrow Y_1$ such that $m_1=h_2l_1$. Thus $\varphi_1=\Sigma^{-1}g_n\cdot h_1+h_2f_1$.
Similarly we can show that there exist morphisms $h_{i+1}:X_{i+1}\rightarrow Y_{i}$ such that $\varphi_i=h_{i+1}f_i+g_{i-1}h_i$, $i=2,3,\cdots,n$.
We take $\varphi_n'=(h_{n+1}-\Sigma h_1)f_n$, then $\varphi_n-\varphi_n'=g_{n-1}h_n+\Sigma h_1\cdot f_n$. Hence the morphism $\varphi_\bullet$ is homotopy to the morphism $\varphi'_\bullet=(0,0,\cdots,0,\varphi_n')$ with an $n$-$\Sigma$-periodic homotopy $(h_1, h_2, \cdots, h_n)$.
$$\xymatrix{
\Sigma^{-1}X_n \ar[ddd]^{\Sigma^{-1}\varphi_n} \ar[rrr]^{\Sigma^{-1}f_n} \ar@{->>}[rd]^{\pi}  &  &  &  X_1 \ar[rr]^{f_1} \ar[ddd]^{\varphi_1}\ar[lllddd]^{h_1}\ar[ldd]^{c}\ar@{->>}[rd]^{\pi_1} & & X_2 \ar[ddd]^{\varphi_2}\ar[llddd]^{h_2} \ar[r]^{f_2}& \cdots \ar[r]^{f_{n-1}} & X_n \ar[r]^{f_n}\ar[ddd]^{\varphi_n} & \Sigma X_1\ar[ddd]^{\Sigma\varphi_1} \ar[lddd]^{h_{n+1}}\\
 & M \ar[ddd]^{h} \ar@{>->}[urr]^{l}\ar[rd]^a & & & M_1 \ar@{>->}[ru]^{l_1}\ar[ldd]_{m_1} &  & & & & \\
&  &  I \ar[ldd]^b \ar[lld]^{d} & & & & & &\\
\Sigma^{-1}Y_n \ar[rrr]^{\Sigma^{-1}g_n}\ar@{->>}[rd]^{\pi'} & & & Y_1 \ar[rr]^{g_1} & & Y_2 \ar[r]^{g_2} & \cdots \ar[r]^{g_{n-1}} & Y_n \ar[r]^{g_n} &\Sigma Y_1\\
&  N \ar@{>->}[urr]^{l'}& & & & & & &\\
}$$

Let $\varphi_\bullet:(X_\bullet,f_\bullet)\rightarrow (Y_\bullet,g_\bullet)$ and $\psi_\bullet: (Y_\bullet,g_\bullet)\rightarrow (Z_\bullet,h_\bullet)$ be morphisms in the kernel of $Z_1$. Up to homotopy, we assume that $\varphi_\bullet=(0,0,\cdots,0,\varphi_n)$ and $\psi_\bullet=(0,0,\cdots,0,\psi_n)$. Thus we get the following diagram.
$$\xymatrix{
X_1 \ar[r]^{f_1}\ar[d]^{0} & X_2 \ar[r]^{f_2}\ar[d]^{0} & \cdots \ar[r]^{f_{n-2}}& X_{n-2} \ar[r]^{f_{n-1}}\ar[d]^{0}& X_n \ar[r]^{f_n}\ar[d]^{\varphi_n}\ar@{-->}[ld]^{a_n} & \Sigma X_1 \ar[d]^{0}\\
Y_1 \ar[r]^{g_1}\ar[d]^0 & Y_2 \ar[r]^{g_2} \ar[d]^0 & \cdots \ar[r]^{g_{n-2}}& Y_{n-1} \ar[r]^{g_{n-1}}\ar[d]^0 & Y_n \ar[r]^{g_n}\ar[d]^{\psi_n}& \Sigma Y_1\ar[d]^0\ar@{-->}[ld]^{b_{n+1}}\\
Z_1\ar[r]^{h_1} & Z_2 \ar[r]^{h_2}  & \cdots \ar[r]^{h{n-2}} & Z_{n-1} \ar[r]^{h_{n-1}} & Z_n \ar[r]^{h_n} & \Sigma Z_1 \\
}$$
Since $g_n\varphi_n=0$ and $\psi_ng_{n-1}=0$, we have $\varphi_n$ factors through $g_{n-1}$ and $\psi_n$ factors through $g_n$. Thus $\psi_n\varphi_n=b_{n+1}g_ng_{n-1}a_n=0$. So $\psi_\bullet\varphi_\bullet=0$.
\end{proof}

The following proposition is a higher version of \cite[Proposition 8.7]{[Am]}.

\begin{prop}\label{2.5}
The category of $\Phi$-$n$-$\Sigma$-complexes is equivalent to the category of $n$-$\Sigma$-complexes which are homotopy-equivalent to standard $n$-angles.
\end{prop}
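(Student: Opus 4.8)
The plan is to view both categories in the statement as full subcategories of $C^{ex}_{n\mbox{-}\Sigma}(\mbox{proj}\mathcal{C})$ and to prove that they have exactly the same objects; since a full subcategory is determined by its class of objects, this yields the asserted equivalence (in fact an equality of subcategories). Write $\mathcal{A}$ for the category of $\Phi$-$n$-$\Sigma$-complexes and $\mathcal{B}$ for the category of $n$-$\Sigma$-complexes that are homotopy-equivalent to standard $n$-angles. The proof then reduces to the two object-level inclusions $\mathcal{B}\subseteq\mathcal{A}$ and $\mathcal{A}\subseteq\mathcal{B}$.

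For $\mathcal{B}\subseteq\mathcal{A}$, suppose $X_\bullet$ is homotopy-equivalent to a standard $n$-angle $T_M$. Since $T_M\in\Phi$ (as noted right after the definition of $T_M$), $T_M$ is itself a $\Phi$-$n$-$\Sigma$-complex, so Lemma \ref{2.2} applies with $Y_\bullet=T_M$ and shows that $X_\bullet$ is a $\Phi$-$n$-$\Sigma$-complex. Thus every object of $\mathcal{B}$ belongs to $\mathcal{A}$, and this direction is immediate from Lemma \ref{2.2}.

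The substantive inclusion is $\mathcal{A}\subseteq\mathcal{B}$, where I would use Lemma \ref{2.4} as the main tool. Given a $\Phi$-$n$-$\Sigma$-complex $X_\bullet$, put $M=\mbox{ker}f_1=Z_1(X_\bullet)$ and consider the standard $n$-angle $T_M=T(M)$, which satisfies $Z_1(T_M)=Z_1T(M)=M$ by Lemma \ref{2.1}. Because $Z_1\colon K^{ex}_{n\mbox{-}\Sigma}(\mbox{proj}\mathcal{C})\to\underline{\mbox{mod}}\mathcal{C}$ is full, the identity $\mbox{id}_M\colon Z_1(X_\bullet)\to Z_1(T_M)$ lifts to a morphism $\varphi_\bullet\colon X_\bullet\to T_M$ in $K^{ex}_{n\mbox{-}\Sigma}(\mbox{proj}\mathcal{C})$ with $Z_1(\varphi_\bullet)=\mbox{id}_M$. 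Since $\mbox{id}_M$ is an isomorphism in $\underline{\mbox{mod}}\mathcal{C}$ and $Z_1$ detects isomorphisms (both parts of Lemma \ref{2.4}), $\varphi_\bullet$ is a homotopy-equivalence. Hence $X_\bullet$ is homotopy-equivalent to the standard $n$-angle $T_M$, so $X_\bullet\in\mathcal{B}$.

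Putting the two inclusions together yields $\mathcal{A}=\mathcal{B}$ and hence the proposition. I do not expect a genuine obstacle here: all the analytic work has been front-loaded into Lemmas \ref{2.2} and \ref{2.4}, and the argument for $\mathcal{A}\subseteq\mathcal{B}$ is essentially the observation that fullness of $Z_1$ lets one lift an identity while detection of isomorphisms upgrades it to a homotopy-equivalence. The only points requiring care are that $\varphi_\bullet$ must be constructed in the homotopy category $K^{ex}_{n\mbox{-}\Sigma}(\mbox{proj}\mathcal{C})$ rather than in $C^{ex}_{n\mbox{-}\Sigma}(\mbox{proj}\mathcal{C})$, and that the equality $Z_1(T_M)=M$ rests on the identity $Z_1T=\mbox{Id}$ from Lemma \ref{2.1}; once these are recorded, the verification is formal.
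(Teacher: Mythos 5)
Your proof is correct and follows essentially the same route as the paper: the easy inclusion via Lemma \ref{2.2}, and the converse by producing a homotopy equivalence $X_\bullet\to T_M$ lifting $\mathrm{Id}_M$ and then applying the isomorphism-detection part of Lemma \ref{2.4}. The only difference is that the paper builds the lift explicitly (using projective-injectivity of $X^1M$ and $X^2M$ to get $\varphi_1,\varphi_2$ and then completing by Lemma \ref{2.3}) instead of quoting the fullness statement of Lemma \ref{2.4}; this is slightly safer, since the paper's justification of fullness on all of $K^{ex}_{n\mbox{-}\Sigma}(\mbox{proj}\mathcal{C})$ is terse, whereas the explicit construction only needs the lifting property between $\Phi$-complexes, which Lemma \ref{2.3} genuinely provides.
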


\begin{proof}
Since standard $n$-angles are $\Phi$-$n$-$\Sigma$-complexes, Lemma \ref{2.2} implies that each  $n$-$\Sigma$-complex which is homotopy-equivalent to a standard $n$-angle is a $\Phi$-$n$-$\Sigma$-complex. Let $(X_\bullet, f_\bullet)$ be a $\Phi$-$n$-$\Sigma$-complex. Let $M$ be the kernel of $f_1$. Since $X^1M$ and $X^2M$ are projective-injective, we can find morphisms $\varphi_1:X_1\rightarrow X^1M$ and $\varphi_2:X_2\rightarrow X^2M$ such that the following diagram is commutative.
$$\xymatrix{
& X_1 \ar[r]^{f_1}\ar[dd]^{\varphi_1} & X_2 \ar[r]^{f_2}\ar[dd]^{\varphi_2} & X_3 \ar[r]^{f_3} & \cdots \ar[r]^{f_{n-1}}& X_n \ar[r]^{f_n} & \Sigma X_1 \ar[dd]^{\Sigma \varphi_1}\\
 M \ar[ru]\ar@{=}[dd] & & & & & & & \\
 &  X^1M \ar[r]^{} & X^2M \ar[r]^{} & X^3M \ar[r]^{} & \cdots \ar[r]^{} & X^nM \ar[r]^{}& \Sigma X^1M\\
 M\ar[ru] & & & & & & & & &\\
}$$ We can complete $(\varphi_1,\varphi_2)$ to an $n$-$\Sigma$-periodic morphism $\varphi_\bullet=(\varphi_1,\varphi_2, \cdots,\varphi_n)$ from $X_\bullet$ to $T_M$ by Lemma \ref{2.3}. Since $Z_1(\varphi_\bullet)=\mbox{Id}_M$, we obtain that $\varphi_\bullet$ is a homotopy-equivalence by Lemma \ref{2.4}, i.e., $X_\bullet$ is homotopy-equivalent to $T_M$.
\end{proof}

\section{proof of Theorem 1.1}

\noindent {\bf Proof of Theorem 1.1}. We are going to show that $(\mathcal{C},\Sigma,\Phi)$ is an $n$-angulated category.

(N1a) and (N1b) are trivial.

(N1c). Let $f_1:X_1\rightarrow X_2$ be a morphism in $\mathcal{C}$, $A=\mbox{ker}f_1$ and $B=\mbox{coker}f_1$. By sequence (2.1) we easily obtain the following commutative diagram
$$\xymatrix{
0 \ar[r] & A \ar[r]^l\ar@{=}[d] & X_1 \ar[r]^{f_1}\ar[d] & X_2 \ar[r]^{f_2}\ar[d] & X^1B \ar[r]^{f_3} \ar[d]&  \cdots \ar[r]^{f_{n-2}} & X^{n-3}B \ar[r]^{\pi_{n-1}}\ar[d] & C \ar[d]^g \ar[r]& 0\\
0 \ar[r] & A \ar[r] & I_A \ar[r] & I_{\Omega^{-1}A} \ar[r] & I_{\Omega^{-2}A} \ar[r] & \cdots \ar[r] & I_{\Omega^{2-n}A} \ar[r] & \Omega^{1-n}A \ar[r] & 0\\
}$$
with exact rows. Since $g$ is an isomorphism in $\underline{\mbox{mod}}\mathcal{C}$, we take $h=(\Omega^{-1}g)^{-1}\alpha_A$. Consider the following commutative diagram
$$\xymatrix{
0\ar[r] & C \ar[r]^{l_{n-1}}\ar@{=}[d] & X_n \ar[r]^{\pi_n} \ar[d] & \Sigma A\ar[r]\ar[d]^h & 0 \\
0\ar[r] & C \ar[r]^{i_C}\ar[d]^g & I_C \ar[r]^{p_C}\ar[d] & \Omega^{-1}C \ar[r]\ar[d]^{\Omega^{-1}g} & 0\\
0\ar[r] & \Omega^{1-n}A \ar[r] & I_{\Omega^{1-n}A} \ar[r] & \Omega^{-n}A \ar[r] & 0\\
}$$ where $X_n$ is the pullback of $h$ and $p_C$.
It is easy to see that $$X_1\xrightarrow{f_1}X_2\xrightarrow{f_2} X^1B\xrightarrow{f_3} X^2B\xrightarrow{f_{4}}\cdots\xrightarrow{f_{n-2}} X^{n-3}B\xrightarrow{l_{n-1}\pi_{n-1}} X_n\xrightarrow{\Sigma l\cdot\pi_n}\Sigma X_1$$
is a $\Phi$-$n$-$\Sigma$-complex.

(N2). Let $X_\bullet$ be a $\Phi$-$n$-$\Sigma$-complex. Since $X_\bullet[1]$ is isomorphic to the left rotation of $X_\bullet$ and $X_\bullet[-1]$ is isomorphic to the right rotation of $X_\bullet$, we only need to show that $X_\bullet[1]$ and $X_\bullet[-1]$ are $\Phi$-$n$-$\Sigma$-complexes. In fact,  $X_\bullet$ is homotopy-equivalent to $T_M$ for some object $M\in\mbox{mod}\mathcal{C}$ by Proposition \ref{2.5}. Thus $X_\bullet[1]$ is homotopy-equivalent to $T_M[1]$. Since $T:\underline{\mbox{mod}}\mathcal{C}\rightarrow K^{\tiny\mbox{ex}}_{n\mbox{-}\Sigma}(\mbox{proj}\mathcal{C})$ is a triangle functor, we obtain that $T_{\Omega^{-1}M}$ is isomorphic to $T_M[1]$. Now $X_\bullet[1]$ is homotopy-equivalent to $T_{\Omega^{-1}M}$, which implies that $X_\bullet[1]$ is a  $\Phi$-$n$-$\Sigma$-complex. Similarly we can show $X_\bullet[-1]$ is a $\Phi$-$n$-$\Sigma$-complex.

(N3). It follows from Lemma \ref{2.3}.

(N4). Suppose we have a commutative diagram $$\xymatrix{
X_\bullet:& X_1 \ar[r]^{f_1}\ar[d]^{\varphi_1} & X_2 \ar[r]^{f_2}\ar[d]^{\varphi_2} & X_3 \ar[r]^{f_3} & \cdots \ar[r]^{f_{n-1}}& X_n \ar[r]^{f_n} & \Sigma X_1 \ar[d]^{\Sigma \varphi_1}\\
Y_\bullet:& Y_1 \ar[r]^{g_1} & Y_2 \ar[r]^{g_2} & Y_3 \ar[r]^{g_3} & \cdots \ar[r]^{g_{n-1}} & Y_n \ar[r]^{g_n}& \Sigma Y_1\\
}$$ whose rows are $\Phi$-$n$-$\Sigma$-complexes. Let $(M,l:M\rightarrow X_1)$ be the kernel of $f_1$, $(N,l':N\rightarrow Y_1)$ be the kernel of $g_1$ and $h:M\rightarrow N$ the induced morphism. Then there exist two homotopy-equivalences $a_\bullet:X_\bullet\rightarrow T_M$ and $b_\bullet:T_N\rightarrow Y_\bullet$ by the proof of Proposition \ref{2.5}. Let $\phi_\bullet=b_\bullet\cdot T(h)\cdot a_\bullet=(\phi_1,\phi_2,\cdots,\phi_n)$ be the morphism from $X_\bullet$ to $Y_\bullet$. It is easy to see that $(\varphi_1-\phi_1)l=0$, so there exists a morphism $h_2:X_2\rightarrow Y_1$ such that $\varphi_1-\phi_1=h_2f_1$. Note that $(\varphi_2-\phi_2-g_1h_2)f_1=0$, there exists a morphism $h_3:X_3\rightarrow Y_2$ such that $\varphi_2-\phi_2-g_1h_2=h_3f_2$, i.e., $\varphi_2-\phi_2=g_1h_2+h_3f_2$. Let $\varphi_3=\phi_3+g_2h_3$, then $g_3\varphi_3=g_3\phi_3=\phi_4f_3$. If we take $\varphi_4=\phi_4,\cdots,\varphi_n=\phi_n$, then $g_i\varphi_i=\varphi_{i+1}f_i$, $i=4,\cdots,n-1$, and $g_n\varphi_{n}=g_n\phi_n=\Sigma\phi_1\cdot f_n=\Sigma(\varphi_1-h_2f_1)\cdot f_n=\Sigma\varphi_1\cdot f_n$. Thus $\varphi_\bullet=(\varphi_1,\varphi_2,\varphi_3,\cdots,\varphi_n)$ is an $n$-$\Sigma$-periodic morphism and $\varphi_\bullet$ is $n$-$\Sigma$-homotopic to $\phi_\bullet$ with the homotopy $(0, h_2,h_3,0,\cdots,0)$.

It remains to show that the cone $C(\varphi_\bullet)$ is a $\Phi$-$n$-$\Sigma$-complex. In fact, since $\varphi_\bullet$ is $n$-$\Sigma$-homotopic to $\phi_\bullet=b_\bullet\cdot T(h)\cdot a_\bullet$, where $a_\bullet$ and $b_\bullet$ are homotopy-equivalent, we obtain that the cones $C(\varphi_\bullet)$, $C(\phi_\bullet)$ and $C(T(h))$ are isomorphisms in $K^{\tiny\mbox{ex}}_{n\mbox{-}\Sigma}(\mbox{proj}\mathcal{C})$. Let $M\xrightarrow{h}N\rightarrow C(h)\rightarrow \Omega^{-1}M$ be a triangle in $\underline{\mbox{mod}}\mathcal{C}$. Since $T:\underline{\mbox{mod}}\mathcal{C}\rightarrow K^{\tiny\mbox{ex}}_{n\mbox{-}\Sigma}(\mbox{proj}\mathcal{C})$ is a triangle functor, $T_M\xrightarrow{T(h)}T_N\rightarrow T_{C(h)}\rightarrow T_M[1]$ is a triangle in $K^{\tiny\mbox{ex}}_{n\mbox{-}\Sigma}(\mbox{proj}\mathcal{C})$. Thus $C(T(h))\cong T_{C(h)}$ in $K^{\tiny\mbox{ex}}_{n\mbox{-}\Sigma}(\mbox{proj}\mathcal{C})$. By these isomorphisms and Proposition \ref{2.5} we get $C(\varphi_\bullet)$ is a $\Phi$-$n$-$\Sigma$-complex. \hspace{4.2cm} $\Box$

\section{Application to self-injective algebrs}

In this section, we will apply Theorem \ref{1} to self-injective algebras and give some examples.

\begin{lem}(\cite[Lemma 1.5]{[GSS]})\label{4.1}
Let $A$ be a finite-dimensional indecomposable quasi-periodic $k$-algebra, then $A$ is a self-injective algebra.
\end{lem}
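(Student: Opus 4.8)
The plan is to transfer the twisted periodicity of $A$ over the enveloping algebra to ordinary syzygies over $A$, and then to read off self-injectivity from the resulting density of the syzygy functor. First I would fix a minimal projective bimodule resolution $\cdots\to P_1\to P_0\to A\to 0$ over $A^e$ and truncate it at step $n$, obtaining, via the hypothesis $\Omega^n_{A^e}(A)\cong{}_1A_\sigma$, an exact sequence
$$\begin{gathered} 0\to {}_1A_\sigma\to P_{n-1}\to\cdots\to P_0\to A\to 0 \end{gathered}\eqno(\ast)$$
of $A^e$-modules. The crucial observation is that each $P_i$ is a summand of a free bimodule $(A\otimes_k A)^{m_i}$, and $A\otimes_k A$ is free as a right $A$-module (right action on the second factor); hence every $P_i$, as well as $A$ and ${}_1A_\sigma$, is projective as a right $A$-module. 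Viewing $(\ast)$ as a sequence of right $A$-modules, the surjection $P_0\to A$ splits because $A$ is projective on the right, and inductively every bimodule syzygy $\Omega^i_{A^e}(A)$ is a right-module summand of $P_i$; thus $(\ast)$ splits as a sequence of right $A$-modules.

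Because $(\ast)$ is split exact for the right $A$-module structure, applying $-\otimes_A X$ for any left $A$-module $X$ keeps it exact. Each $P_i\otimes_A X$ is a projective left $A$-module, since $(A\otimes_k A)\otimes_A X\cong A\otimes_k X$ is free, while $A\otimes_A X\cong X$ and ${}_1A_\sigma\otimes_A X$ is the image of $X$ under the autoequivalence $F:={}_1A_\sigma\otimes_A-$ of $\mbox{mod}A$ (invertible with quasi-inverse ${}_1A_{\sigma^{-1}}\otimes_A-$, by the displayed bimodule formula ${}_1A_\sigma\otimes_A{}_1A_{\sigma^{-1}}\cong A$). Reading off this resolution, I would conclude $\Omega^n_A(X)\cong F(X)$ in the stable category $\underline{\mbox{mod}}A$ for every $X$; that is, $\Omega^n_A\cong F$ as endofunctors of $\underline{\mbox{mod}}A$, and in particular $\Omega^n_A$ is an autoequivalence.

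Finally I would deduce self-injectivity. Since $\Omega^n_A$ is dense on $\underline{\mbox{mod}}A$, every module, and in particular every injective left $A$-module $I$, is stably an $n$-th syzygy, say $I\cong\Omega^n_A(Y)$ in $\underline{\mbox{mod}}A$. Representing $\Omega^n_A(Y)$ by a genuine syzygy realizes $I$, up to projective summands, as a submodule of a projective module; as $I$ is injective this inclusion splits, so $I$ is a direct summand of a projective, hence projective. Thus injective and projective left $A$-modules coincide and $A$ is self-injective. I expect the main obstacle to be the first step: verifying that $(\ast)$ genuinely splits on one side, so that $-\otimes_A X$ remains exact and the syzygy transfer $\Omega^n_A(X)\cong F(X)$ is valid. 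Once this one-sided projectivity bookkeeping is in place, the passage from density of $\Omega^n_A$ to self-injectivity is routine.
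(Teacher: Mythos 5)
Your argument is correct. Note that the paper offers no proof of this lemma at all --- it simply cites \cite[Lemma 1.5]{[GSS]} --- and your reasoning (the truncated bimodule resolution is split exact as right $A$-modules, so tensoring with any $X$ gives $\Omega^n_A(X)\cong{}_1A_\sigma\otimes_AX$ up to projectives, whence $\Omega^n_A$ is dense and every injective embeds in, hence splits off from, a projective) is the standard proof and essentially the one in that reference. The only step you elide is the final passage from ``every injective is projective'' to ``every projective is injective,'' which follows by comparing the (equal, finite) numbers of indecomposable injectives and projectives; this is routine.
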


\begin{lem}\label{4.2}
Let $A$ be a finite-dimensional self-injective $k$-algebra. If there exists an exact sequence of $A$-$A$-bimodules
$$\begin{gathered} 0\rightarrow\ _1A_\sigma \rightarrow P_n\rightarrow P_{n-1}\rightarrow \cdots\rightarrow P_1\rightarrow A\rightarrow 0
\end{gathered} \eqno (4.1) $$
where $\sigma$ is an automorphism of $A$ and the $P_i$'s  are projective as bimodules, then $\mbox{proj}A$ has an $n$-angulation structure where the suspension functor is $-\otimes_AA_{\sigma^{-1}}$.
\end{lem}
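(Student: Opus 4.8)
The plan is to reduce Lemma \ref{4.2} to the already-established Theorem \ref{1} by exhibiting the hypotheses of that theorem for the specific category $\mathcal{C}=\mbox{proj}A$ and automorphism $\Sigma=-\otimes_AA_{\sigma^{-1}}$. The key observation is that for a finite-dimensional self-injective algebra $A$, the functor category $\mbox{mod}(\mbox{proj}A)$ is equivalent to $\mbox{mod}A$, the category of finitely generated right $A$-modules, via the evaluation/Yoneda correspondence. Under this identification, $\mbox{proj}A$ corresponds to the projective objects and, since $A$ is self-injective, the projectives coincide with the injectives, so $\mbox{mod}A$ is a Frobenius category. This verifies condition (1) of the Assumption. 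The automorphism $\sigma$ induces the automorphism $\Sigma=-\otimes_AA_{\sigma^{-1}}$ on $\mbox{proj}A$, which extends to the exact endofunctor $-\otimes_AA_{\sigma^{-1}}$ on all of $\mbox{mod}A$.

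\emph{Next} I would produce the exact sequence of exact endofunctors required by condition (2). The given bimodule resolution (4.1) is a sequence of $A$-$A$-bimodules with the $P_i$ projective as bimodules. Applying the functor $-\otimes_A(-)$, that is, tensoring a right $A$-module $M$ over $A$ with the terms of (4.1), produces for each $M$ an exact sequence
$$0\rightarrow M\otimes_A\ _1A_\sigma\rightarrow M\otimes_AP_n\rightarrow\cdots\rightarrow M\otimes_AP_1\rightarrow M\otimes_AA\rightarrow 0.$$
The point is that $-\otimes_A\ _1A_\sigma\cong -\otimes_AA_{\sigma^{-1}}$ after the appropriate identification of twisted bimodules (using $_1A_\sigma\cong A_{\sigma^{-1}}$ as functors, or more precisely matching the twist conventions fixed in the introduction), and $-\otimes_AA\cong\mbox{Id}$. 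Thus, reindexing, one obtains an exact sequence of exact endofunctors
$$0\rightarrow\mbox{Id}\rightarrow X^1\rightarrow X^2\rightarrow\cdots\rightarrow X^n\rightarrow\Sigma\rightarrow 0,$$
where $X^i=-\otimes_AP_{n+1-i}$.

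\emph{It remains to check} that each $X^i$ takes values in $\mbox{proj}(\mbox{proj}A)=\mbox{proj}A$. This is precisely where the hypothesis that the $P_i$ are projective as \textbf{bimodules} is used: a projective $A$-$A$-bimodule is a summand of a free bimodule $A\otimes_kA$, and tensoring a right $A$-module $M$ over $A$ with $A\otimes_kA$ yields $M\otimes_kA$, which is a projective (indeed, $\dim_k M$ copies of $A$) right $A$-module. Hence $-\otimes_AP_i$ sends every module to a projective, and as a functor it lands in $\mbox{proj}A$. I would also need to verify that these tensor functors are exact; tensoring with a bimodule that is projective (hence flat) on the appropriate side is exact, so this is immediate. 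With both conditions of the Assumption verified, Theorem \ref{1} applies directly and yields the desired $n$-angulation structure on $(\mbox{proj}A,\Sigma)$.

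\emph{The main obstacle} I anticipate is bookkeeping with the twist conventions: one must track carefully that $-\otimes_A\ _1A_\sigma$ as a functor on $\mbox{proj}A$ agrees with the claimed suspension $-\otimes_AA_{\sigma^{-1}}$, and that the induced action on the functor category $\mbox{mod}A$ matches the abstract $\Sigma$ appearing in Theorem \ref{1} (in particular, the convention $M\mapsto M\cdot\Sigma^{-1}$ from Remark (c)). The underlying homological content is routine once the identification $\mbox{mod}(\mbox{proj}A)\simeq\mbox{mod}A$ is in place and the projectivity of the $P_i$ as bimodules is exploited; the only genuine care required is aligning all the variance and twist conventions so that the exact sequence of functors has exactly the form demanded by condition (2).
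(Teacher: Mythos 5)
Your overall strategy --- identify $\mbox{mod}(\mbox{proj}A)$ with $\mbox{mod}A$, note that self-injectivity gives the Frobenius property, and feed the bimodule resolution into Theorem \ref{1} --- is exactly the paper's, and your verifications that projectivity of the $P_i$ as bimodules forces $-\otimes_AP_i$ to be exact and to take values in $\mbox{proj}A$ are correct. The gap is in how you handle the two end terms. Tensoring (4.1) with a right module $M$ gives
$$0\rightarrow M\otimes_A{}_1A_\sigma\rightarrow M\otimes_AP_n\rightarrow\cdots\rightarrow M\otimes_AP_1\rightarrow M\rightarrow 0,$$
i.e.\ a sequence of functors of the shape $0\rightarrow F\rightarrow X^1\rightarrow\cdots\rightarrow X^n\rightarrow\mbox{Id}\rightarrow 0$ with $F=-\otimes_A{}_1A_\sigma$, whereas condition (2) of the Assumption requires $\mbox{Id}$ at the \emph{left} end and the suspension at the \emph{right} end. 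Your two devices for bridging this --- the claimed isomorphism $-\otimes_A{}_1A_\sigma\cong-\otimes_AA_{\sigma^{-1}}$ and the word ``reindexing'' --- do not work. As endofunctors of $\mbox{mod}A$, $-\otimes_A{}_1A_\sigma$ is $M\mapsto M_\sigma$ while $-\otimes_A{}_1A_{\sigma^{-1}}$ is $M\mapsto M_{\sigma^{-1}}$; these are mutually \emph{inverse} (since ${}_1A_\sigma\otimes_A{}_1A_{\sigma^{-1}}\cong A$), not isomorphic, unless $\sigma^2$ is inner. And an exact sequence cannot be read backwards: even granting your identification you would only obtain $0\rightarrow\Sigma\rightarrow X^1\rightarrow\cdots\rightarrow X^n\rightarrow\mbox{Id}\rightarrow 0$, still not of the required form, and the middle terms are already in the order $X^i=-\otimes_AP_{n+1-i}$, so there is nothing left to ``reindex''.

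The repair is short and is precisely what the paper does: first tensor the bimodule sequence (4.1) with ${}_1A_{\sigma^{-1}}$, using ${}_1A_{\sigma^{-1}}\otimes_A{}_1A_\sigma\cong A$, to obtain
$$0\rightarrow A\rightarrow{}_1A_{\sigma^{-1}}\otimes_AP_n\rightarrow\cdots\rightarrow{}_1A_{\sigma^{-1}}\otimes_AP_1\rightarrow{}_1A_{\sigma^{-1}}\rightarrow 0,$$
whose middle terms are still projective bimodules; the induced sequence of endofunctors then reads $0\rightarrow\mbox{Id}\rightarrow-\otimes_AA_{\sigma^{-1}}\otimes_AP_n\rightarrow\cdots\rightarrow-\otimes_AA_{\sigma^{-1}}\otimes_AP_1\rightarrow-\otimes_AA_{\sigma^{-1}}\rightarrow 0$, which has exactly the form demanded by Theorem \ref{1} and exhibits the suspension as $-\otimes_AA_{\sigma^{-1}}$. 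One should also record, as the paper does, that $-\otimes_AA_{\sigma^{-1}}$ really is an automorphism of $\mbox{proj}A$, via $e_iA\otimes_AA_{\sigma^{-1}}\cong\sigma^{-1}(e_i)A$.
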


\begin{proof} Since $A$ is self-injective, mod$A$ is a Frobenius category. If one tensors the  sequence (4.1) with $_1A_{\sigma^{-1}}$, one obtain the following exact sequence of $A$-$A$-bimodules
$$
0\rightarrow A \rightarrow\ _1A_{\sigma^{-1}}\otimes_A P_n\rightarrow\ _1A_{\sigma^{-1}}\otimes_AP_{n-1}\rightarrow \cdots\rightarrow\ _1A_{\sigma^{-1}}\otimes_AP_1\rightarrow\ _1A_{\sigma^{-1}} \rightarrow 0
 $$ where all the $_1A_{\sigma^{-1}}\otimes_AP_i$ are projective as bimodules. Thus we have the following exact sequence of exact endofunctors of mod$A$
$$0\rightarrow \mbox{Id} \rightarrow -\otimes_AA_{\sigma^{-1}}\otimes_A P_n\rightarrow  -\otimes_AA_{\sigma^{-1}}\otimes_AP_{n-1}\rightarrow \cdots$$
$$\rightarrow -\otimes_AA_{\sigma^{-1}}\otimes_AP_1\rightarrow  -\otimes_AA_{\sigma^{-1}} \rightarrow 0.$$
Moreover, the functors $-\otimes_AA_{\sigma^{-1}}\otimes_AP_i$ take values in proj$A$. Note that $e_iA\otimes_AA_{\sigma^{-1}}\cong e_iA_{\sigma^{-1}}\cong\sigma^{-1}(e_i)A$ for each idempotent $e_i$ of $A$, the functor $-\otimes_AA_{\sigma^{-1}}: \mbox{proj}A\rightarrow\mbox{proj}A$ is an automorphism.
 By Theorem \ref{1}, we deduce that
$(\mbox{proj}A, -\otimes_AA_{\sigma^{-1}})$ has an $n$-angulation.
\end{proof}


\begin{thm}
Let $A$ be a finite-dimensional indecomposable quasi-periodic $k$-algebra. Assume that $\Omega^n_{A^e}(A)\cong$ $_1A_\sigma$ as $A$-$A$-bimodules for
  an automorphism $\sigma$ of $A$. Then for each positive integer $m$,  the category $(\mbox{proj}A, \Sigma)$ has an $mn$-angulation structure, where $\Sigma$ is the functor $-\otimes_AA_{\sigma^{-m}}:\mbox{proj}A\rightarrow\mbox{proj}A$.
  In particular, if $\sigma$ is of finite order $l$, then $(\mbox{proj}A, \mbox{Id}_{\mbox{proj}A})$ has an $ln$-angulation structure.
\end{thm}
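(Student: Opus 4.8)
The plan is to derive the theorem from Lemma~\ref{4.2}, applied with the automorphism $\sigma^m$ and the integer $mn$ in place of $\sigma$ and $n$. By Lemma~\ref{4.1} the algebra $A$ is self-injective, so Lemma~\ref{4.2} applies as soon as I exhibit an exact sequence of $A$-$A$-bimodules
$$0\rightarrow\ {}_1A_{\sigma^m}\rightarrow Q_{mn}\rightarrow Q_{mn-1}\rightarrow\cdots\rightarrow Q_1\rightarrow A\rightarrow 0$$
in which every $Q_i$ is projective as a bimodule. Granting such a sequence, Lemma~\ref{4.2} produces an $mn$-angulation of $\mbox{proj}A$ with suspension functor $-\otimes_A A_{(\sigma^m)^{-1}}=-\otimes_A A_{\sigma^{-m}}=\Sigma$, which is exactly the assertion.

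To construct this sequence I begin from the hypothesis $\Omega^n_{A^e}(A)\cong\ {}_1A_\sigma$. Truncating a projective bimodule resolution of $A$ gives an exact sequence of bimodules
$$0\rightarrow\ {}_1A_\sigma\rightarrow P_n\rightarrow P_{n-1}\rightarrow\cdots\rightarrow P_1\rightarrow A\rightarrow 0\eqno(\ast)$$
with each $P_i$ projective; this is a sequence of the shape (4.1). The crucial point is that $-\otimes_A\ {}_1A_\sigma$ is an autoequivalence of the category of $A$-$A$-bimodules, with quasi-inverse $-\otimes_A\ {}_1A_{\sigma^{-1}}$, since ${}_1A_\sigma\otimes_A\ {}_1A_{\sigma^{-1}}\cong\ {}_1A_{\mathrm{id}}=A$. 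Hence it is exact and carries projective bimodules to projective bimodules.

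Now I splice. For $0\le j\le m-1$, applying $-\otimes_A\ {}_1A_\sigma$ to $(\ast)$ a total of $j$ times and using the isomorphism ${}_1A_\sigma\otimes_A\ {}_1A_{\sigma^j}\cong\ {}_1A_{\sigma^{j+1}}$ yields an exact sequence of projective bimodules of length $n$ running from ${}_1A_{\sigma^{j+1}}$ to ${}_1A_{\sigma^j}$. Splicing these $m$ blocks along the common terms ${}_1A_{\sigma^j}$ gives the required length-$mn$ sequence, whose leftmost term is ${}_1A_{\sigma^m}$; equivalently this shows $\Omega^{mn}_{A^e}(A)\cong\ {}_1A_{\sigma^m}$ together with an explicit projective realization. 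Feeding it into Lemma~\ref{4.2} finishes the general statement. For the special case, if $\sigma$ has finite order $l$ I take $m=l$; then ${}_1A_{\sigma^l}\cong\ {}_1A_{\mathrm{id}}=A$ and the suspension becomes $-\otimes_A A_{\sigma^{-l}}=\mathrm{Id}_{\mbox{proj}A}$, giving an $ln$-angulation with identity suspension.

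I expect the main point requiring care to be the splicing step: one must check that tensoring the free bimodule $A\otimes_k A$ with ${}_1A_\sigma$ is again free (so that projectivity genuinely persists through each application of the autoequivalence) and must track the twist arithmetic ${}_1A_\sigma\otimes_A\ {}_1A_\tau\cong\ {}_1A_{\tau\sigma}$ correctly when identifying the connecting terms and composing the spliced differentials. Once Lemma~\ref{4.2} is available, the remainder is formal.
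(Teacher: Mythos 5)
Your proposal is correct and follows essentially the same route as the paper: reduce to Lemma~\ref{4.2} via an exact bimodule sequence of length $mn$ ending in ${}_1A_{\sigma^m}$, obtained by tensoring the basic sequence (4.1) with twisted bimodules and splicing; the paper phrases this as an induction on $m$ using ${}_1A_{\sigma^{m-1}}\otimes_A-$, while you splice $m$ explicit blocks, which is the same argument. The twist arithmetic and the preservation of projectivity you flag are handled identically in the paper.
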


\begin{proof}
By Lemma \ref{4.1}, we know that $A$ is a self-injective algebra.
We claim that there exists an exact sequence of $A$-$A$-bimodules
$$\begin{gathered}
 0\rightarrow\ _1A_{\sigma^{m}} \rightarrow P_{mn}\rightarrow P_{mn-1}\rightarrow \cdots\rightarrow P_{n+1}\rightarrow P_{n}\rightarrow\cdots\rightarrow P_1\rightarrow A\rightarrow 0
\end{gathered}$$
where the $P_i$'s  are projective as bimodules. Thus the theorem immediately follows from Lemma 4.2. We prove this claim by induction on $m$. Since $\Omega^n_{A^e}(A)\cong$ $_1A_\sigma$ in mod$A^e$, there exists an exact sequence (4.1), 
where the $P_i$'s  are projective as $A$-$A$-bimodules.
Assume now that $m>1$ and our claim holds for $m-1$.
Applying the functor $_{1}A_{\sigma^{m-1}}\otimes_A-$ to the  sequence (4.1), we obtain the following exact sequence of $A$-$A$-bimodules
$$0\rightarrow\ _{1}A_{\sigma^m}\rightarrow\ _{1}A_{\sigma^{m-1}}\otimes_AP_n\rightarrow\cdots\rightarrow\  _{1}A_{\sigma^{m-1}}\otimes_AP_1\rightarrow\ _{1}A_{\sigma^{m-1}}\rightarrow 0. \ \eqno (4.2)$$
Take $P_{(m-1)n+i}=\ _{1}A_{\sigma^{m-1}}\otimes_AP_i$, where $i=1,2,\cdots,n$. Then the $P_{(m-1)n+i}$'s are projective as bimodules. By induction and (4.2),
we obtain that our claim holds for each positive integer $m$.
\end{proof}

In particular, we get the following easy corollary.

\begin{cor}
Let $A$ be a finite-dimensional periodic $k$-algebra of periodicity $n$. Then the category $(\mbox{proj}A, \mbox{Id}_{\mbox{proj}A})$ has an $n$-angulation structure. Moreover, for each positive integer $m$, the category $(\mbox{proj}A, \mbox{Id}_{\mbox{proj}A})$ has an $mn$-angulation structure.
\end{cor}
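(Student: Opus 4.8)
The plan is to recognize that a periodic algebra is nothing but the special case $\sigma=\mbox{Id}_A$ of the quasi-periodic setting of Theorem \ref{2}, and then simply specialize. First I would observe that, by definition, $A$ being periodic of periodicity $n$ means $\Omega^n_{A^e}(A)\cong A$ as $A$-$A$-bimodules. Writing $A={}_1A_{\mbox{Id}}$ for the identity automorphism $\mbox{Id}=\mbox{Id}_A$, this says precisely that $A$ is quasi-periodic with $\Omega^n_{A^e}(A)\cong {}_1A_\sigma$ for $\sigma=\mbox{Id}_A$. As is standard for periodic algebras, I take $A$ to be indecomposable so that the hypotheses of Theorem \ref{2} are met; by Lemma \ref{4.1} such an $A$ is then automatically self-injective.

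Next I would apply Theorem \ref{2} with this choice of $\sigma$. For each positive integer $m$ the suspension functor prescribed by the theorem is $-\otimes_A A_{\sigma^{-m}}=-\otimes_A A_{\mbox{Id}}=-\otimes_A A$, which is naturally isomorphic to $\mbox{Id}_{\mbox{proj}A}$ via the canonical unit isomorphism $M\otimes_A A\cong M$. Hence Theorem \ref{2} immediately yields an $mn$-angulation structure on $(\mbox{proj}A,\mbox{Id}_{\mbox{proj}A})$, which is exactly the ``moreover'' assertion.

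Finally, for the first assertion I would invoke the ``in particular'' clause of Theorem \ref{2}: the identity automorphism $\sigma=\mbox{Id}_A$ has finite order $l=1$, so the clause produces an $ln=n$-angulation structure on $(\mbox{proj}A,\mbox{Id}_{\mbox{proj}A})$. Equivalently, this is just the $m=1$ instance of the previous paragraph.

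I do not expect any genuine obstacle here, since the corollary is a direct specialization of Theorem \ref{2}. The only points that deserve a word of care are the identification of $-\otimes_A A_{\mbox{Id}}$ with the identity functor on $\mbox{proj}A$ (the standard unit isomorphism of the tensor product) and the indecomposability of $A$ that is implicit in the notion of a periodic algebra, which is what licenses the appeal to Theorem \ref{2}.
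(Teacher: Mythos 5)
Your proposal is correct and is exactly the intended argument: the paper offers no separate proof for this corollary, presenting it as the immediate specialization of Theorem 4.3 to $\sigma=\mbox{Id}_A$, with the first assertion being the case $m=1$ (equivalently the ``in particular'' clause with $l=1$). Your remarks on identifying $-\otimes_A A_{\mbox{Id}}$ with the identity functor and on the implicit indecomposability hypothesis are sensible points of care but do not change the route.
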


\begin{example} We will revisit \cite[Corollary 9.3]{[Am]}.
Let $\Delta$ be a graph of generalized Dynkin type and $A=P^f(\Delta)$ be the corresponding deformed preprojective algebra introduced by Bialkowski-Erdmann-Skowro\'{n}ski \cite{[BES]}. We note that if $f$ is zero, then $P^f(\Delta)$ is just the usual preprojective algebra introduced by Gelfand-Ponomarev \cite{[GP]}.
By \cite[Proposition 3.4]{[BES]}, we get $\Omega^3_{A^e}(A)\cong$ $_{1}A_{\sigma^{-1}}$ as $A$-$A$-bimodules for an automorphism $\sigma$ of $A$ of finite order. Moreover, for each idempotent $e_i$ of $A$, we have $\sigma(e_i)=e_{\nu(i)}$, where $\nu$ is the Nakayama permutation.
By Theorem 4.1, proj$A$ is a triangulated category, and the suspension functor $-\otimes_A A_{\sigma}$ turns out to be the Nakayama functor.
Let $m$ be the order of $\sigma$, then $(\mbox{proj}A, \mbox{Id}_{\tiny\mbox{proj}A})$ has a $3m$-angulation structure.
\end{example}

\begin{example}
Let $A=kQ_n/I_s$ be a self-injective Nakayama $k$-algebra, where $n\geq1$, $s\geq2$, $Q_n$ is the quiver
 $$\xymatrixcolsep{1.5pc}\xymatrix{
 & 1\ar[r]& 2\ar[rd]& \\
 n\ar[ru] & & & 3\ar[dl] \\
 & {\small n-1}\ar[lu]&  4 \ar@{..>}[l] & \\
 }$$ and $I_s$ is the ideal generated by paths of length $s$. It is easy to see that $A$ is  of finite representation type. In the notation of Asashiba this is of type $(A_n, \frac{s}{n},1)$. By Table 5.2 in \cite{[D2]}, we know the periodicity of $A$ is $$p=\left\{
                   \begin{array}{cc}
                     s ,& k=2, n=1\ \mbox{and}\ 2\nmid s; \\
                     \frac{2s}{(s,n+1)}, & \mbox{otherwise}. \\
                   \end{array}
                 \right.
 $$ Thus $(\mbox{proj}A, \mbox{Id}_{\mbox{proj}A})$ has a structure of $p$-angulated category.
\end{example}


\vspace{2mm}\noindent {\bf Acknowledgements} \ The author thanks  professor Xiaowu Chen for drawing his attention to the reference \cite{[Am]} and for valuable conversation on this topic.

\end{document}